\newcommand{\lp }{\left(}
\newcommand{\rp  }{\right)}
\newcommand{\dtr }{\rm det}
\newcommand{\R}{\mathbb{R}}
\newcommand{\C}{\mathbb{C}}
\newcommand{\e}{{\rm e}}
\newcommand{\pr}{\partial}
\newcommand{\tr}{{\rm Tr}}
\newcommand{\sgf}{\sigma_f^2}
\newcommand{\sgfp}{\sigma_{f,P}^2}
\newcommand{\smfrac}[2]{{\textstyle \frac{#1}{#2}}}
\def\<{\langle}
\def\>{\rangle}
\begin{document}
\markboth{HA,CO} {pol}
\title{Some Remarks on Preconditioning Molecular Dynamics}

\author{HOUSSAM ALRACHID}
\address{IDP Laboratory, Orl\'eans University, Orl\'eans, France\\
houssam.alrachid@univ-orleans.fr}

\author{LETIF MONES}
\address{Mathematics Institute, University of Warwick, Coventry, United Kingdom\\
l.mones@warwick.ac.uk}

\author{CHRISTOPH ORTNER}
\address{Mathematics Institute, University of Warwick, Coventry, United Kingdom\\
c.ortner@warwick.ac.uk}

\maketitle

\begin{abstract}
We consider a Preconditioned Overdamped Langevin algorithm that does not alter the invariant distribution (up to controllable discretisation errors) and ask whether preconditioning improves the standard model in terms of reducing the asymptotic variance and of accelerating convergence to equilibrium. We present a detailed study of the dependence of the asymptotic variance on preconditioning in some elementary toy models related to molecular simulation. Our theoretical results are supported by numerical simulations.
\end{abstract}

\keywords{Preconditioned Overdamped Langevin algorithm, Asymptotic Variance, Central Limit Theorem, Model Hamiltonians, Lattice Model}

 \section{Introduction}\label{intro}

The problem of convergence to equilibrium for diffusion processes has attracted considerable attention in recent years. Due to the significant computational cost associated with MCMC type algorithms it is important to understand, and where possible accelerate, the convergence to equilibrium of systems in statistical physics, materials science, biochemistry, machine learning and many other areas.
% . .. .the analysis of Markov Chain algorithms such as Euler and MCMC algorithms still is also important (see for example [\refcite{Diaconis,rob04}]).
%%%% CO: not sure what this sentence is saying.

In such applications it is typically necessary to compute expectations of the form
\begin{equation} \label{eq:intro:muf}
   \mu(f)= \int_{\R^N} f(x)\mu(dx)
\end{equation}
of an observable $f$ with respect to a target probability distribution $\mu(dx)$ on $\R^N$,
where $\mu$ is of the form
\begin{equation*}\label{gib}
   \mu(dx)=Z_{\mu}^{-1}\e^{-E_N}dx,
\end{equation*}
where $E_N:\R^N \rightarrow \R$ is e.g. a potential energy and $Z_{\mu}^{-1}$ the normalisation constant. Note that we have absorbed temperature into $E_N$. The usual difficulty is that the integration
\eqref{eq:intro:muf} cannot be performed directly due to the high dimensionality
of the problem and MCMC methods are instead employed.
% the probability density $\p_\infty = Z_\mu^{-1} e^{-E_N}$ is only known up to the normalization constant $Z_\mu$.

%%%% CO: too early to introduce this
%  in the quadratic form  $E_N(x)=\frac{1}{2}x^THx$, $H\in \mathcal{S}_N^{\geq 0}(\R^N) $ (i.e the space of symmetric, positive defined matrices in $\mathcal{M}(\R^N)$) and $Z_{\mu}^{-1}$ the normalization constant supposed to be finite (i.e.  $Z_{\mu}^{-1}<\infty$).
% \medskip

Ill-conditioning of $E_N$, which can be induced by a variety of mechanisms, but
in particular high-dimensionality (large $N$), is a common challenge to 
overcome in order to construct an efficient sampling scheme. An
attractive approach is to {\em precondition} the
MCMC algorithm. The algorithm is transformed by a well-chosen operator
(the preconditioner) in a way that does not alter the invariant measure
but (hopefully) accelerates convergence.

The purpose of this paper is to explore to what extent (or, whether at all)
preconditioning of a Langevin-type algorithm helps to accelerate the computation
of expectations. Our study is motivated by recent advances such as the
Riemannian Manifold MALA [\refcite{giro09}], Stochastic Newton Methods
[\refcite{mart12}], non-reversible diffusions [\refcite{dunc,Lelievre}], optimal
scaling for Langevin algorithms [\refcite{bes09,Pillai}] and affine-invariant
sampler [\refcite{hou}].

All of these references present different variants of preconditioning or
related modifications of MCMC methods and result in an improved performance. Reviewing all these different approaches would go beyond the scope
of this introduction, however, we mention the Riemannian Manifold Monte Carlo 
method  [\refcite{giro09}, \refcite{xifa}] as the main motivation for the present
study. In this method, preconditioning is understood as performing
MCMC on a Riemannian manifold defined via a ``metric'' $P(x)$, the
preconditioner. A range of preconditioners, ideally the hessian $\nabla^2
E_N(x)$ or a positive definite matrix $P(x)$ closely approximating the Hessian,
are tested. In a broad range of examples (including, e.g., logistic regression,
a stochastic volatility model, and an ODE inference example) it has been shown in
numerical tests that a well-chosen preconditioner improves both the mixing time
as well as the convergence of the probability density functions to the target
measure, with speed-ups ranging from moderate $O(1)$ factors to orders of
magnitude depending on the application.

Motivated by these promising results we applied analogous preconditioned
sampling algorithms  to some model molecular systems, but did not always (rarely
in fact) observe the speed-ups we expected. 
For instance, in Figure~\ref{fig:pmf_mtd_mtd}
we show the convergence of reconstructed free energy profiles from
metadynamics [\refcite{Laio02}] simulations comparing the unpreconditioned and preconditioned dynamics using a Hessian--based preconditioner with varying parameter sets. This type of preconditioner usually yields at least an order of magnitude speed-up in typical geometry optimisation
tasks, but fails to accelerate the assembly of the free energy surface (see Appendix~\ref{eq:mtd}
for the precise setup of the test).
The question naturally arises whether this failure is due to a lack of
fine-tuning or due to a more fundamental limitation.

\begin{figure}
   \centering
   \includegraphics[width=0.9\columnwidth]{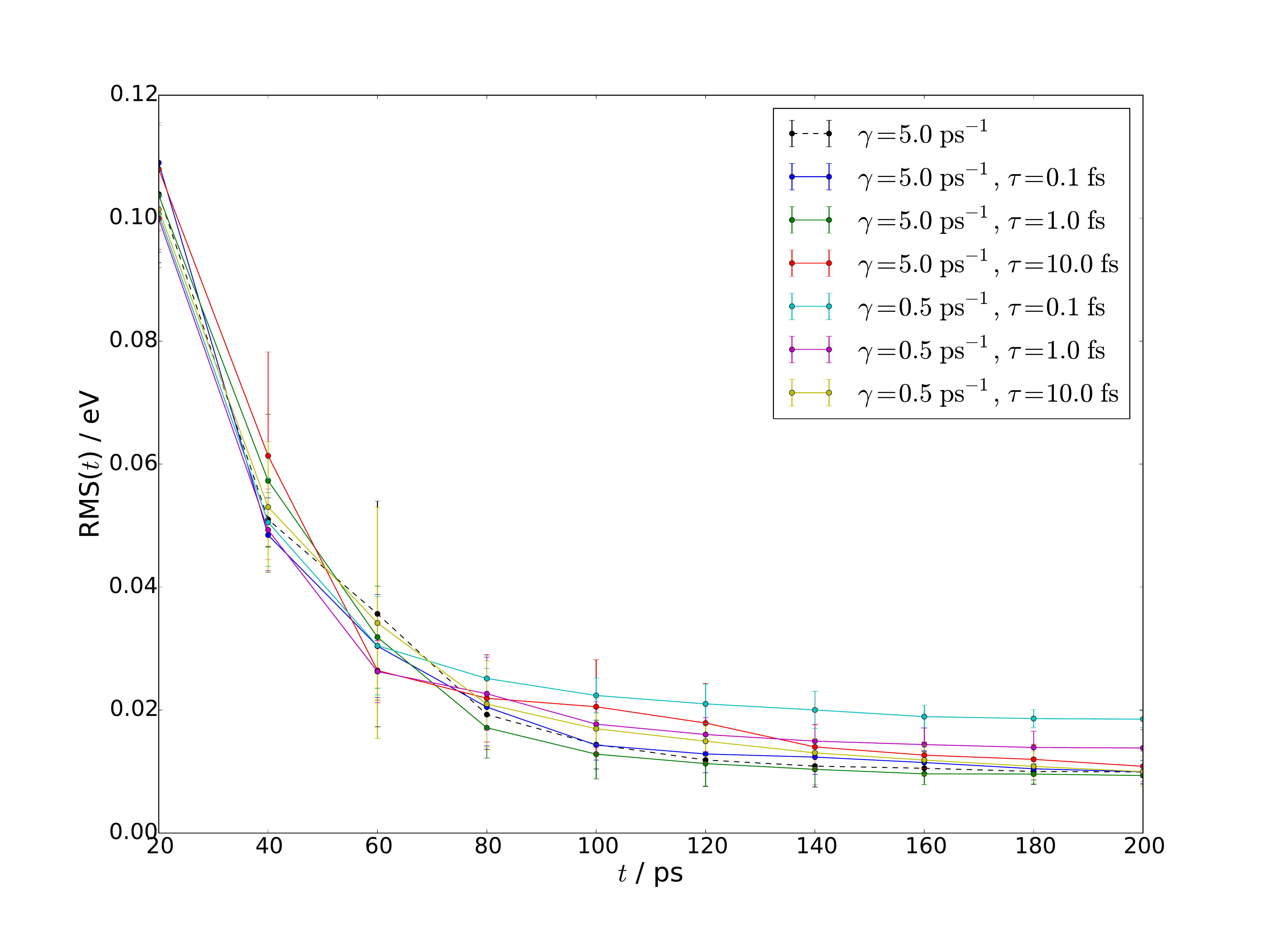}
   \caption{RMS errors of the reconstructed free energy surface profiles from metadynamics simulations without (black dashed line) and with preconditioning (coloured solid lines). Error bars represent one standard deviation based on 10 independent simulations. See \ref{sec:mtd} for the detailed setup.}
   \label{fig:pmf_mtd_mtd}
\end{figure}

Thus, to understand better these observations we will study some elementary analytical
and numerical examples, which capture some essential characteristics of typical
molecular systems, but where explicit results can still be obtained. The origin
of the difficulty comes from the fact that it is highly dependent not only on
$\mu$ but also on the observable $f$ whether preconditioning can achieve a
significant (or, any) speed-up. We will demonstrate that for some typical
observables $f$, even moderate preconditioning can achieve significant speed-ups
while for other, equally common, observables no speed-up should be at all
expected even if $E_N$ is highly ill-conditioned. Moreover, we will show that
the dimensionality (1D, 2D, 3D) of the molecular structure plays a crucial role.

While our discussion is primarily motivated by applications in molecular simulations,
it should straightforwardly adapt the arguments and findings to other
application areas.

\subsection{Langevin Algorithm}
The most commonly employed algorithms in molecular simulation are based on
discretising the Langevin equation, but for the sake of simplicity we will
focus on the overdamped Langevin equation,
\begin{equation} \label{over}
   dX_t= - \nabla E_N(X_t) dt + \sqrt{2} dW_t,
\end{equation}
where $W_t$ is a $N$-dimensional standard Brownian motion. Under mild technical conditions on $E_N$ and $X_t$ it is known that the
dynamics $(X_t)_{t\geq 0}$ is ergodic  with respect to the measure $\mu$ (see
[\refcite{rob96}]).

Discretising in time,
\begin{equation} \label{eq:euler}
      X_{m+1} = X_m - \delta \nabla E_N(X_m) + \sqrt{2\delta} R_m,
\end{equation}
where $R_m \sim N(0, I_{N \times N})$ and $\delta$ is a parameter quantifying the
size of the discrete time increment (time-step), we obtain a Markov chain with
invariant measure $\tilde\mu$, where typically $\tilde\mu-\mu \sim  O(t^{-1/2}) +  O(\delta)$. Here, $O(t^{-1/2})$ represents the statistical error due to the finite length of the simulation, while $O(\delta)$ represents the bias due to
time discretisation.

Adding metropolisation to \eqref{eq:euler} leads to the Metropolis Adjusted Langevin Algorithm (MALA). It is
well known that the measure $\mu$ is invariant for the MALA
[\refcite{rob04,rob96}]. In molecular simulation it is
common to assume that the bias in $\tilde\mu$ is negligible compared to
modelling and statistical errors and therefore we
will not consider metropolisation in the present paper.
% We will explore the effect of preconditioning on metropolosation / MALA in a separate work.
The resulting algorithm is called the Unadjusted Langevin Algorithm (ULA).

As a matter of fact, we will argue in Section \ref{sec:time-discretisation} that the issues we
are addressing are largely unrelated to the time-discretisation, hence we will
focus on the continuous process \eqref{over} instead of the
Markov chain \eqref{eq:euler}.

\def\emp{\epsilon}

Thus, given an observable $f\in L^1(\mu)$, we are interested in quantifying the convergence
\begin{equation} \label{ergo}
   \emp_T(f) := \frac{1}{T}\int_0^T f(X_t) dt \rightarrow \mu(f)
     \qquad  \text{for $\mu$-a.e } X_0.
\end{equation}
Under additional assumptions on $\mu$ and $f$, this convergence result is
accompanied by a central limit theorem which characterises the asymptotic
distribution of the fluctuations, i.e.
\begin{equation}
   \sqrt{t} \lp\emp_t(f)-\mu(f)\rp \xrightarrow{D} \mathcal{N}(0,\sgf),
\end{equation}
where  $\sgf$ is known as the asymptotic variance for the observable $f$ (see [\refcite{catt,kipnis}]).

In Section \ref{clte} we will explicitly compute $\sgf$ for some simple
energy functionals and observables, which mimic typical objects
of interest in molecular simulations and demonstrate that for some typical
observables, $\sigma_f$ may be strongly dependent on the conditioning of $E_N$,
in particular on system size $N$, while for others the dependence on $N$
is negligible.

\subsection{Preconditioned Langevin Algorithm}
For a (fixed) preconditioner $P \in \R^{N \times N}$, symmetric positive definite, we consider the preconditioned
Overdamped Langevin dynamics ($P$-Langevin),
\begin{equation}\label{pol}
   dX_t^P = -  P^{-1}\nabla E_N(X_t^P)dt + \sqrt{2} P^{-1/2} dW_t.
\end{equation}
The standard overdamped Langevin dynamics is recovered by taking $P=I$. It is in
principle possible to allow $P = P(X)$ but for simplicity we will not consider
this in the present work. Note that, if $P$ is fixed then the coordinate
transform $Z = P^{1/2} X$ allows us to easily lift results from \eqref{over} to
\eqref{pol}; see Section \ref{sec:coordinate-transform} for more details.

The preconditioner does not affect the invariance property of the diffusion
process, i.e., the target measure $\mu$ is still invariant for the P-Langevin
process \eqref{pol}. However, it can affect the convergence of the process to
the invariant measure. That is, for $X_t^P \sim \mu_t^P = \varphi_t^P dx$, we
will characterise in Section~\ref{convin} the rate of convergence of $\varphi_t^P$ to $\varphi_\infty$ ($\varphi_\infty $ denotes the density of $\mu$) and demonstrate how
preconditioning improves this rate.

Moreover, we also obtain
\begin{equation} \label{ergo2}
   \emp_T^P(f) := \frac{1}{T}\int_0^T f(X_t^P)dt
   \to \mu(f), \qquad  \text{for $\mu$-a.e } X_0.
\end{equation}
and analogously to the standard Langevin dynamics, a central limit theorem
characterizes the asymptotic distribution of the fluctuations,
\begin{equation}\label{tcl2}
  \sqrt{t} \lp\emp_t^P(f) - \mu(f) \rp \xrightarrow{D} \mathcal{N}(0,\sgfp),
\end{equation}
where $\sgfp$ is the asymptotic variance of $f$
under $P$-Langevin dynamics.

The main aim of our paper is to present several simplified but
still realistic examples at which we can observe {\em whether or not} preconditioning
accelerates sampling in the sense that it achieves a reduction in the asymptotic
variance, i.e., $\sgfp \ll \sgf$.

\subsection*{Outline}

The rest of the paper is organised as follows. In Section~\ref{sec:preliminaries}
we present the model Hamiltonians, $E_N$, to motivate some key assumptions
that we make throughout our analysis. We also recall the coordinate transform that
we use to reduce $P$-Langevin to standard Langevin, and we explain why the time-discretisation
is a negligible component and can therefore be ignored for our analysis.

In Section~\ref{convin} we show a long time convergence result to the invariant measure in the $P$-Langevin process. For some quadratic model Hamiltonians we can then precisely quantify the speed-up afforded through preconditioning.

Section~\ref{clte} is devoted to describe how the central limit theorem \eqref{tcl2} arises from the solution of the Poisson equation associated with the generator of the dynamics. This is then followed by a detailed analysis of $\sgf$ and $\sgfp$ for some quadratic model Hamiltonians and observables.

In Section~\ref{test} we show a numerical application to illustrate the reduction in the asymptotic variance.

\section{Preliminaries}\label{preli}
\label{sec:preliminaries}

\subsection{Coordinate transformation}
\label{sec:coordinate-transform}
As mentioned above, a convenience afforded by our assumption that the
preconditioner $P$ is constant, is that a simple coordinate transformation
can transform the P-Langevin dynamics \eqref{pol} into standard Langevin
dynamics \eqref{over} by taking $Z_t = P^{1/2} X_t$. We will now briefly review this transformation.

First, let us introduce the new coordinates and associated energy
$$
   z := P^{1/2} x \qquad \text{and} \qquad
   E_N^P(z) := E_N(x) = E_N(P^{-1/2} z).
$$
The standard overdamped Langevin dynamics for $z$ then reads
\begin{equation} \label{pmala2}
   \begin{split}
   dZ_t &= - \nabla_z E_N(P^{-1/2} Z_t) dt + \sqrt{2} dW_t \\
         &= - P^{-1/2} \nabla_x E_N(P^{-1/2} Z_t) + \sqrt{2} dW_t.
   \end{split}
\end{equation}
Upon multiplying the equation with $P^{-1/2}$ we clearly recover
\eqref{pol}; that is, \eqref{pmala2} and \eqref{pol} are indeed equivalent. In the new coordinate system the infinitesimal generator operator of the diffusion process $Z_t$ is given by:
\begin{equation}\label{ops}
\mathcal{L}_P= - \nabla_z E_N^P(z)\cdot\nabla_z +  \Delta_z.
\end{equation}

In terms of estimating observables, we obtain that
\begin{equation} \label{empTP-intermsof-Z}
   \emp_T^P(f) = \frac{1}{T}\int_0^T f(X_t^P)dt
   = \frac{1}{T} \int_0^T f(P^{-1/2} Z_t) dt.
\end{equation}

The key observation in analysing how preconditioning changes the properties of
$E_N^P$ and hence of the Langevin dynamics is that
$$
   \nabla_z^2 E_N^P(z) = P^{-1/2} \nabla_x^2 E_N(x) P^{-1/2}.
$$
In the following, if $H$ and $P$ are symmetric positive definite, we will write
$H_P := P^{-1/2} H P^{-1/2}$. We also observe, for future reference, that the spectrum of $H_P$ satisfies 
$$
   \inf \sigma(H_P) = \inf_{v^T P v = 1} v^T H v \qquad \text{and} \qquad
   \sup \sigma(H_P) = \sup_{v^T Pv = 1} v^T H v.
$$

% \medskip
%
% Indeed,\\
% $$\begin{aligned}
% \dot{X} & = -  P ^{-1}\nabla_xE_N(X)+\sqrt{2 }P^{-1/2}\dot{W}\\
% P^{1/2}\dot{X} & = -  P ^{-1/2}\nabla_xE_N(X)+\sqrt{2 }\dot{W}\\
% P^{1/2}\dot{X} &= -  \lp P ^{-1/2} \frac{\pr }{\pr_x} E_N(X)\rp^T+\sqrt{2 }\dot{W}\\
% \dot{Z} &= -  \lp  \frac{\pr }{\pr_x} E_N(X) \frac{\pr }{\pr_z}(P ^{-1/2}Z) \rp^T+\sqrt{2 }\dot{W}\\
% \dot{Z} &= -  \lp  \frac{\pr }{\pr_z} E_N(P ^{-1/2}Z) \rp^T+\sqrt{2 }\dot{W}.
% \end{aligned}$$
% \medskip

\subsection{Model Hamiltonians}
\label{sec:model_hamiltonians}
Let $x \in \R^N$ describe a system of $N/d$ particles at
positions $(y_\alpha)_{\alpha = 1}^{N/d} \subset \R^d$, then
a simple model for potential energy is given by
$$
   E_N(y) = \sum_{\alpha = 1}^{N/d} \sum_{\beta \neq \alpha}
      \phi( |y_\alpha - y_\beta| ),
$$
where $\phi$ is e.g. a Lennard-Jones type potential,
$\phi(r) = r^{-12} - 2 r^{-6}$.
Such systems exhibit complex meta-stable behaviour, which is an issue to
be entirely separated from the ill-conditioning due to high dimension.

A much simpler situation is a mass-spring model, where $u = (u_\alpha) \in
\R^N$, with $u_\alpha$ e.g. denoting out-of-plane displacement of a particle,
and particle connectivity described by an equivalence relation $\alpha \sim
\beta$. Then the energy can be written as
\begin{equation} \label{eq:nonlinear_model}
E_N(u) = \sum_{\alpha \sim \beta} \phi_{\alpha\beta}( u_\alpha - u_\beta ),
\end{equation}
where $\phi_{\alpha\beta}$ could be taken as strictly convex to avoid
meta-stability. We will use systems of this kind in our numerical
experiments.

In order to admit explicit analytical calculations we simplify
further $E_N$ by expanding it about an equilibrium (e.g., at $u = 0$) which then
yields a quadratic energy
\begin{equation} \label{eq:quad_model}
   E_N(u) = \sum_{\alpha \sim \beta} \smfrac{k_{\alpha\beta}}{2}
         |u_\alpha - u_\beta|^2 =: {\textstyle \frac12} u^T H u.
\end{equation}
The spring constants $k_{\alpha\beta}$ could model how the interaction
between different atomic species / environments differs. We assume throughout that there
exist bounds $\underline{k}, \bar{k}$ on the spring constants and
a bound $\bar{n}$ on the number of neighbours, which are both independent of $N$.
More precisely, we assume that
\begin{equation} \label{eq:bounds_quad}
   0 < \underline{k} \leq k_{\alpha\beta} \leq \bar{k} \qquad
   \text{and} \qquad
   \# \{ \beta : \alpha \sim \beta \} \leq \bar{n}.
\end{equation}

Note that we have chosen a scaling of the energies where increasing system size $N \to
\infty$ does not yield a continuum limit but rather an infinite lattice
system. The consequence is that if we have a bound in the spring constants
$0 < k_{\alpha\beta} \leq \bar{k}$, and each atom $\alpha$ is connected with
at most $\bar{n}$ neighbours, then it follows readily that
\begin{equation} \label{eq:bound_hessian}
   |E_N(u)| \lesssim \bar{n} \bar{k} |u|^2 \qquad \text{or, equivalently}
      \qquad \| H \| \lesssim \bar{n} \bar{k},
\end{equation}
where $\|H\|$ denotes the $\ell^2 \to \ell^2$ operator norm.

To be even more specific let us assume that $E_N$ is a $d$-dimensional lattice
model, i.e., each atom index $\alpha$ corresponds to a coordinate $\ell_\alpha
\in \{0, \dots, M+1\}^d$ and $\alpha \sim \beta$ if and only if $|\ell_\alpha -
\ell_\beta| = 1$, that is
\begin{equation} \label{eq:quad-lattice-model}
    E_N(u) := \sum_{\alpha_1=0}^M \cdots \sum_{\alpha_d = 0}^M
          \sum_{j = 1}^d k_{\alpha,\alpha+e_j} \big|u(\alpha+e_j) - u(\alpha)\big|^2.
\end{equation}
 By clamping the boundary sites at $u = 0$, we obtain $N = M^d$
free lattice sites. In this case, $H$ is a (possibly inhomogeneous) discrete
elliptic operator and employing \eqref{eq:bounds_quad} and
using the min-max characterisation of
eigenvalues (see, e.g., Sect XIII.1 in [\refcite{ReedSimon}])
to compare $H$ to the discrete Laplacian for which the
spectrum can be computed explicitly [\refcite{iserles}], we can readily show that there exist
constants $c_0, c_1$ such that
\begin{equation} \label{eq:spectrum-dimension}
   c_0 (j/N)^{2/d} \leq \lambda_j \leq c_1 (j/N)^{2/d},
\end{equation}
where $\sigma(H) = \{\lambda_j\,|\,j = 1,\dots,N\}$ is the ordered spectrum of $H$.
This dimension-dependence of the eigenvalue distribution will be important later on.

% More generally, let us assume for the sake of simplicity of presentation that
% there exists $s > 0$ such that
% \begin{equation} \label{eq:spectrum-dimension-general}
%    c_0 (j/N)^{s} \leq \lambda_j \leq c_1 (j/N)^{s},
% \end{equation}

\subsection{A model preconditioner}
\label{sec:model_precond}
Although for the simple model problems described in
Section~\ref{sec:model_hamiltonians} it is straightforward to compute Hessians and use those
as preconditioners, this would remove us from the practice of molecular
simulations where Hessians are not normally computable. Instead, we will
consider preconditioners that only roughly capture the structure of the
energy functionals and their Hessians.

Following [\refcite{packwood}],  we will use a preconditioner of the form
\begin{equation} \label{eq:precond_nonlinear}
   v^T P v = c \sum_{\alpha \sim \beta} |v_\alpha - v_\beta|^2,
\end{equation}
where $c$ is a free parameter to be fitted to the model. The idea is that it
captures the connectivity information but not the fine details of the Hessian.
For optimisation and saddle search, preconditioners of this kind have been shown
to yield considerable speed-ups even for much more complex electronic structure
type models [\refcite{packwood}].

For example, considering $E_N$ given by \eqref{eq:quad_model}
with hessian $H := \nabla^2 E_N$ and choosing
$c:= \frac12 (\underline{k} + \bar{k})$, we observe
that
\begin{equation} \label{eq:bounds:HP}
   \smfrac{2 \underline{k}}{\bar{k}+\underline{k}} v^T P v
   \leq v^T  H  v
   \leq \smfrac{2 \bar{k}}{\bar{k}+\underline{k}} v^T P v,
\end{equation}
which in particular implies that
$$
   \kappa\big( H_P \big) \leq \frac{\bar{k}}{\underline{k}}.
$$
Here, $\kappa(A) := \|A\| \|A^{-1}\|$ denotes the condition number of
the matrix $A$.

That is, provided the inhomogeneity $\bar{k}/\underline{k}$ is not too severe,
then the preconditioned energy landscape has only very moderate conditioning,
independent of $N$, while typically $\kappa(\nabla^2 E) \to \infty$
as $N \to \infty$, with a rate depending on the connectivity. For instance,
for a lattice model \eqref{eq:spectrum-dimension} implies that
$\kappa(\nabla^2 E_N) \sim N^{2/d}$.

\subsection{Time Discretisation} \label{sec:time-discretisation}
To measure the cost/accuracy ratio of a practical sampling algorithm based on
the Langevin process we need to account also for the time-discretisation,
\begin{equation} \label{eq:pol_discrete}
   X_{n+1} = X_n - \delta P^{-1} \nabla E_N(X_n)
            + \sqrt{2 \delta} P^{-1/2} R_n,
\end{equation}
where $R_n \sim N(0, I_{N \times N})$.

It is tempting to assume that one advantage afforded by preconditioning is
to take larger time-steps. As we show in the following, this is a matter of
scale, thus justifying our choice to focus purely on the time-continuous
P-Langevin dynamics.

In the scaling that we have chosen in the model problems of Section
\ref{sec:model_hamiltonians}, both the Hessians $H$ and preconditioned Hessians $H_P$ are
bounded operators (on $\ell^2$), independent of system size $N$. Thus, the time-steps have
similar restrictions for the preconditioned and unpreconditioned Langevin
processes [\refcite{hairr}]. More precisely, in the quadratic model problem \eqref{eq:quad_model}
we may assume $\bar{k} = 1$ without loss of generality. Then, using the model
preconditioner \eqref{eq:precond_nonlinear} with $c = \bar{k} = 1$, say, we
have that $\sup \sigma(H) = \sup \sigma(H_P)$.

\begin{proposition} \label{th:time-discretisation}
   Suppose that $E_N(x) = \frac12 x^T H x$ and $P \in \R^{N \times N}$ with $H$
   and $P$ both positive definite. Suppose further that $
   \frac{\delta }{2}|P^{-1}H| < 1$. Then the invariant measure for
   \eqref{eq:pol_discrete} is Gaussian with covariance matrix
   $$C_\delta^P = \big(I - \smfrac{\delta}{2}P^{-1}H\big)^{-1}H^{-1}.$$
\end{proposition}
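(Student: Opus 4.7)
For a quadratic $E_N$, the iteration \eqref{eq:pol_discrete} becomes the affine Gaussian recursion
\begin{equation*}
X_{n+1} = A X_n + \sqrt{2\delta}\, P^{-1/2} R_n,
\qquad A := I - \delta P^{-1} H.
\end{equation*}
Since $A$ is similar (via $P^{1/2}$) to the symmetric matrix $I - \delta H_P$, whose spectrum lies in $(-1,1)$ under the hypothesis $\frac{\delta}{2}|P^{-1}H|<1$ together with positive definiteness of $H_P$, the recursion is strictly contractive. A standard fixed-point argument on Gaussian densities (or direct expansion of the iterated recursion) then produces a unique Gaussian invariant measure with zero mean, whose covariance $C_\delta^P$ is characterised by the discrete Lyapunov equation
\begin{equation*}
C_\delta^P = A\, C_\delta^P A^T + 2\delta P^{-1}.
\end{equation*}

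The main algebraic obstacle is that $P^{-1}H$ is in general nonsymmetric, so one cannot directly diagonalise this Lyapunov equation in the $X$-variables. I would sidestep this by invoking the coordinate transform of Section~\ref{sec:coordinate-transform}: setting $Z_n := P^{1/2} X_n$ converts the recursion into $Z_{n+1} = (I - \delta H_P) Z_n + \sqrt{2\delta}\, R_n$ with symmetric $H_P$. In the orthonormal eigenbasis of $H_P$ the Lyapunov equation for the $Z$-covariance $\tilde C$ decouples into the scalar identities $\tilde c_i = (1-\delta\lambda_i)^2 \tilde c_i + 2\delta$, whose unique positive solutions are $\tilde c_i = [\lambda_i(1 - \frac{\delta}{2}\lambda_i)]^{-1}$; reassembling yields $\tilde C = H_P^{-1}(I - \frac{\delta}{2} H_P)^{-1}$.

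Pulling back via $C_\delta^P = P^{-1/2}\tilde C P^{-1/2}$, the identity $P^{-1/2} H_P^{-1} P^{-1/2} = H^{-1}$ together with the similarity $P^{1/2}(I - \frac{\delta}{2} H_P)^{-1} P^{-1/2} = (I - \frac{\delta}{2} H P^{-1})^{-1}$ gives
\begin{equation*}
C_\delta^P = H^{-1}\bigl(I - \smfrac{\delta}{2} H P^{-1}\bigr)^{-1}
            = \bigl(I - \smfrac{\delta}{2} P^{-1} H\bigr)^{-1} H^{-1},
\end{equation*}
where the last equality (and manifest symmetry) follows by noting that both expressions are inverses of $H - \frac{\delta}{2} H P^{-1} H$. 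The only genuinely delicate point is the contraction/uniqueness step; once the spectral bound on $A$ is established everything else is a mechanical consequence of the coordinate transform that has already been set up in the paper.
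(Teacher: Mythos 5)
Your proof is correct, and it takes a somewhat different (and arguably cleaner) route than the paper's. Both start from the same discrete Lyapunov equation $C_\delta^P = A\,C_\delta^P A^T + 2\delta P^{-1}$ with $A = I - \delta P^{-1}H$. The paper then expands $C_\delta^P$ as a formal power series in $\delta$, computes the coefficients $C_k = 2^{-k}(P^{-1}H)^k H^{-1}$ ``by induction'', and sums the resulting geometric series. You instead exploit the coordinate transform $Z = P^{1/2}X$ already set up in Section~\ref{sec:coordinate-transform} to symmetrise the iteration to $Z_{n+1} = (I - \delta H_P)Z_n + \sqrt{2\delta}\,R_n$, diagonalise in the eigenbasis of $H_P$, solve the resulting scalar equations $\tilde c_i = (1-\delta\lambda_i)^2\tilde c_i + 2\delta$ exactly, and pull back. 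Your approach avoids the formal-power-series step (whose induction the paper leaves unverified) and gives a closed-form scalar solution rather than a resummation; it also makes more precise the existence and uniqueness of the invariant Gaussian, via the contraction bound on the spectrum of $A$, a point the paper does not address explicitly. The final algebraic reconciliation $H^{-1}(I - \smfrac{\delta}{2}HP^{-1})^{-1} = (I - \smfrac{\delta}{2}P^{-1}H)^{-1}H^{-1}$, obtained by noting both are inverses of $H - \smfrac{\delta}{2}HP^{-1}H$, is correct and is in fact essentially the last line of the paper's own computation.
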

\begin{proof}
   See Section~\ref{prp}.
\end{proof}

It follows that the error in the covariance operator is $O(\delta)$ with
constant independence of $N$, and independent of the choice of preconditioner.
Indeed, even with an ``optimal'' preconditioner $P = H$, one would not obtain an
improvement in the bias: if there
exist eigenmodes $Hv=\lambda v$ with $\lambda \ll 1$, then
\begin{align*}
   & | C_\delta^H v - C v | = \big|(1-\smfrac\delta 2)^{-1} - 1\big| \, | C v |,  \\
   \text{whereas} \quad
   & | C_\delta^I v - Cv | = \big| (1-\smfrac{\lambda\delta}{2})^{-1} - 1\big| |Cv|,
\end{align*}
that is,
$$
   \frac{| C_\delta^H v - C v |}{| C_\delta^I v - Cv |} \sim \lambda^{-1}
      \qquad \text{for $\delta, \lambda$ sufficiently small,}
$$
where $C=H^{-1}$ is the covariance operator of the unbiased measure. Therefore, for the remainder of the paper, we will not consider the effect of
preconditioning on time-discretisation but only focus on the speed of convergence
to equilibrium in the undiscretised (P-)Langevin dynamics. We only stress that
this point of view is only valid as long as $\|H\|$ and $\|H_P\|$ are comparable.

\section{Exponential convergence to the invariant measure} \label{convin}

In this section, we prove exponential convergence to the equilibrium. For the
sake of simplicity we represent the probability density functions $\mu$ and
$\mu_t^P$ by their respective densities $\varphi_\infty$ and $\varphi_t^P$.
Under the transformation $z = P^{1/2} x$, we obtain transformed probability
densities
\begin{equation*}
   \psi_\infty (z) := (\det P)^{-1/2} \varphi_\infty(P^{-1/2} z) \quad
   \text{and} \quad
   \psi_t^P(z) := (\det P)^{-1/2} \varphi_t^P(P^{-1/2} z).
\end{equation*}
Their evolution is described by the Fokker--Planck equation
\begin{equation}\label{fpfp}
   \pr_t\psi_t^P=\mathcal{L}_P^{*}\psi_t^P
   := \nabla_z \cdot \left(\nabla_z E_N(P^{-1/2} z) \psi_t^P
         + \nabla_z \psi_t^P \right),
\end{equation}
where $\mathcal{L}_P^{*}$ is the dual operator of $\mathcal{L}_P$ (for the $L^2(dz)-$scalar product) defined in \eqref{ops}.

The proof of the following theorem is inspired from [\refcite{Lelievre}] or A.19
in [\refcite{Villani}].

\begin{theorem} \label{th:exponential_convergence}
   Suppose that $E_N\in C^2(\R^N)$, such that $\frac{1}{2}|\nabla E_N(x)|^2-\Delta E_N(x) \rightarrow +\infty $ as $|x|\rightarrow +\infty $. Then there exists
   $\lambda_P > 0$ such that for all initial conditions $\psi_0 \in L^2(\varphi_{\infty}^{-1})$, and for all times $t\geq 0$
   \begin{align*}
   \|\psi_t^P-\psi_\infty \|_{L^2({\psi_{\infty}^{-1})}}^2
   &\leq \e^{-\lambda_P   t}\|\psi_0-\psi_\infty \|_{L^2({\psi_{\infty}^{-1})}}^2,
      \qquad \text{or, equivalently,} \\
   \|\varphi_t^P-\varphi_\infty\|_{L^2({\varphi_{\infty}^{-1})}}^2
      &\leq \e^{-\lambda_P   t}\|\varphi_0-\varphi_\infty\|_{L^2({\varphi_{\infty}^{-1})}}^2,
   \end{align*}
  $||.||_{L^2(\psi_{\infty}^{-1})} $ denotes the norm in ${L^2(\R^N,\psi_{\infty}^{-1}dz)}$, namely $||f||_{L^2(\psi_{\infty}^{-1})}^2=\int_{\R^N}f^2(z)\psi_\infty ^{-1}dz $. The exponent $\lambda_P$ is the spectral gap of
   the Fokker--Planck operator $\mathcal{L}_P^*$ defined in \eqref{fpfp}
   (i.e., the smallest non-zero eigenvalue of $-\mathcal{L}_P^*$).
 \end{theorem}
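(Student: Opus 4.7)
The plan is to exploit the coordinate change $z = P^{1/2}x$ introduced in Section~\ref{sec:coordinate-transform} to reduce the $P$-Langevin convergence question to the standard overdamped Langevin case for the transformed energy $E_N^P(z) = E_N(P^{-1/2}z)$, and then apply the classical entropy-dissipation argument in $L^2(\psi_\infty^{-1})$.

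First I would observe that $\psi_t^P$ solves the Fokker--Planck equation \eqref{fpfp} with drift $\nabla_z E_N^P$, whose infinitesimal generator $\mathcal{L}_P$ given in \eqref{ops} is symmetric on $L^2(\R^N,\psi_\infty dz)$; equivalently, $\mathcal{L}_P^{*}$ is symmetric on $L^2(\R^N,\psi_\infty^{-1}dz)$. I would then introduce the relative density $h_t := \psi_t^P / \psi_\infty$, which by a direct calculation satisfies $\partial_t h_t = \mathcal{L}_P h_t$. The key identity, obtained by differentiating under the integral and integrating by parts against $\psi_\infty$, is the entropy dissipation
\begin{equation*}
   \frac{d}{dt}\|\psi_t^P - \psi_\infty\|_{L^2(\psi_\infty^{-1})}^2
   = \frac{d}{dt} \int_{\R^N} (h_t - 1)^2 \psi_\infty\, dz
   = -2 \int_{\R^N} |\nabla_z h_t|^2 \psi_\infty\, dz.
\end{equation*}

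Next I would invoke a Poincaré inequality for the measure $\psi_\infty dz$: there exists $\lambda_P > 0$ such that for all $g \in H^1(\psi_\infty dz)$ with $\int g\,\psi_\infty dz = 0$,
\begin{equation*}
   \lambda_P \int g^2 \psi_\infty dz \leq 2\int |\nabla_z g|^2 \psi_\infty dz.
\end{equation*}
Applied to $g = h_t - 1$ (whose mean vanishes since $\psi_t^P$ and $\psi_\infty$ are both probability densities), this combines with the dissipation identity and Gronwall's lemma to yield the stated exponential decay with the spectral gap $\lambda_P$ of $-\mathcal{L}_P^{*}$. The equivalence of the two norm estimates then follows from the change of variables $z = P^{1/2}x$, whose Jacobian is constant.

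The main obstacle is verifying that the Poincaré inequality holds under the stated confinement hypothesis, transferred to $E_N^P$. I would do this by the Witten-Laplacian argument: setting $u := g \sqrt{\psi_\infty}$, the Dirichlet form becomes $\int |\nabla u|^2 + W_P(z)\,u^2 \,dz$ with Witten potential $W_P(z) = \tfrac14 |\nabla_z E_N^P|^2 - \tfrac12 \Delta_z E_N^P$. Since $P$ is a fixed, symmetric positive definite matrix, its eigenvalues are bounded away from $0$ and $\infty$, so a direct comparison (using $\nabla_z E_N^P = P^{-1/2}\nabla E_N$ and $\Delta_z E_N^P = \operatorname{tr}(P^{-1}\nabla^2 E_N)$) shows that the coercivity hypothesis $\tfrac12|\nabla E_N|^2 - \Delta E_N \to +\infty$ transfers to $W_P(z) \to +\infty$ as $|z|\to\infty$. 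Standard Schrödinger-operator theory then gives compactness of the resolvent of $-\Delta + W_P$, hence a discrete spectrum with a positive spectral gap $\lambda_P$, exactly as in [\refcite{Villani}, A.19] and [\refcite{Lelievre}]. This identifies $\lambda_P$ as the smallest non-zero eigenvalue of $-\mathcal{L}_P^{*}$ and completes the proof.
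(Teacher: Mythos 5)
Your proposal follows the same route as the paper: the entropy-dissipation identity
\begin{equation*}
  \frac{d}{dt}\|\psi_t^P - \psi_\infty\|^2_{L^2(\psi_\infty^{-1})} = -2\int |\nabla_z(\psi_t^P/\psi_\infty)|^2 \psi_\infty\,dz,
\end{equation*}
a Poincar\'e inequality for $\psi_\infty\,dz$ applied to $\psi_t^P/\psi_\infty - 1$, and Gronwall. Where the paper simply cites Villani~A.19 for the Poincar\'e inequality and leaves the change of coordinates implicit, you usefully fill in both: the Witten--Laplacian / Schr\"odinger-operator argument that produces the spectral gap from the confinement condition, and the verification that the hypothesis on $E_N(x)$ transfers to $E_N^P(z)$ because $P$ is a fixed SPD matrix.
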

\begin{proof}
   See Section~\ref{prtheo1}.
\end{proof}

We now consider the quadratic case, $E_N(x) = \frac12 x^T H x$ with
$H$ symmetric and positive definite. In this case,
we have the following characterisation of the spectrum of
$\mathcal{L}_P^*$ and hence of $\lambda_P$.  The proof is based on [\refcite{hit,Lelievre,ott}].

\begin{theorem}\label{spec}
   The spectrum of the operator $\mathcal{L}_P^*$ is
   \begin{equation}
   \sigma(\mathcal{L}_P^{*})
   = \sigma(\mathcal{L}_P)
   = \displaystyle\Big\{ - {\textstyle \sum_{\lambda\in \sigma(H_P)}} k_\lambda \lambda \,:\, k_\lambda\in \mathbb{N}\Big\}.
   \end{equation}
\end{theorem}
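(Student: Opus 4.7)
The plan is to reduce the problem to a diagonal Ornstein--Uhlenbeck operator via the coordinate transformation $z = P^{1/2} x$ already introduced in Section~\ref{sec:coordinate-transform}, and then to use separation of variables together with the classical Hermite--polynomial spectral theory for one-dimensional OU operators. First I would observe that since $\mathcal{L}_P^*$ is the $L^2(dz)$-adjoint of $\mathcal{L}_P$, the two operators have identical spectra (equivalently, one may work in $L^2(\psi_\infty)$, where $\mathcal{L}_P$ is self-adjoint with the same eigenvalues as on its natural domain), so it suffices to compute $\sigma(\mathcal{L}_P)$. In the $z$-coordinates the energy is $E_N^P(z) = \tfrac12 z^T H_P z$ and hence
\begin{equation*}
   \mathcal{L}_P = -\, H_P z \cdot \nabla_z + \Delta_z.
\end{equation*}

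Next I would diagonalise $H_P$. Since $H_P$ is symmetric positive definite there exists an orthogonal matrix $U$ such that $U^T H_P U = D = \operatorname{diag}(\lambda_1, \dots, \lambda_N)$ with $\{\lambda_i\} = \sigma(H_P)$ (counted with multiplicity). Changing variables to $w = U^T z$, orthogonality of $U$ preserves the Laplacian and yields $H_P z \cdot \nabla_z = D w \cdot \nabla_w$, so
\begin{equation*}
   \mathcal{L}_P = \sum_{i=1}^N \mathcal{L}_P^{(i)},
   \qquad
   \mathcal{L}_P^{(i)} := -\lambda_i w_i \pr_{w_i} + \pr_{w_i}^2.
\end{equation*}
The operators $\mathcal{L}_P^{(i)}$ act on distinct coordinates and therefore commute on their natural domain in $L^2(\psi_\infty)$, which factorises as the tensor product $\bigotimes_{i=1}^N L^2(\R, \e^{-\lambda_i w_i^2/2}\,dw_i)$.

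I would then invoke the classical one-dimensional OU spectral result: the rescaled Hermite polynomials $\{H_k^{\lambda_i}\}_{k \in \mathbb{N}}$ (the Hermite polynomials associated with the Gaussian weight $\e^{-\lambda_i w^2/2}$) form a complete orthonormal basis of $L^2(\R, \e^{-\lambda_i w^2/2}\,dw)$ and satisfy
\begin{equation*}
   \mathcal{L}_P^{(i)} H_k^{\lambda_i} = -\,k\,\lambda_i\, H_k^{\lambda_i},
   \qquad k \in \mathbb{N}.
\end{equation*}
Consequently, tensor products $\prod_{i=1}^N H_{k_i}^{\lambda_i}(w_i)$ are eigenfunctions of $\mathcal{L}_P$ with eigenvalues $-\sum_i k_i \lambda_i$, and by completeness of the tensor product basis these exhaust the spectrum. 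Grouping the multi-indices $(k_i)$ according to which eigenvalue of $H_P$ each coordinate corresponds to yields precisely the claimed formula $\sigma(\mathcal{L}_P) = \{-\sum_{\lambda \in \sigma(H_P)} k_\lambda \lambda : k_\lambda \in \mathbb{N}\}$.

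The main subtlety to be careful about is the functional-analytic setup: the operator $\mathcal{L}_P$ is unbounded and one must specify a domain on which the spectrum is well-defined (the natural choice is the closure on polynomials in $L^2(\psi_\infty)$, or equivalently the Friedrichs extension) and verify that the Hermite basis is complete in this space and that the residual spectrum is empty. This is standard for Gaussian weights on $\R^N$, so I expect it to be the most delicate but routine part of the argument; the algebraic identification of eigenvalues via diagonalisation and Hermite polynomials is then immediate.
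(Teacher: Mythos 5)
Your proof is correct, but it follows a genuinely different route from the paper's. The paper conjugates $\mathcal{L}_P$ by $\e^{\pm\frac12 E_N^P}$ to obtain a Schr\"odinger-type operator, performs a further change of variables $y = H_P^{1/2}z$, and then invokes the Hitrik--Pravda-Starov theory of elliptic quadratic differential operators (via Weyl quantization and the associated Hamilton map) to read off the spectrum from the eigenvalues of the $2N\times 2N$ matrix $G$. You instead exploit the fact that $H_P$ is symmetric: diagonalising $H_P = U D U^T$ and setting $w = U^T z$ decouples $\mathcal{L}_P$ into a sum of one-dimensional Ornstein--Uhlenbeck operators, whose spectra $\{-k\lambda_i\}$ with rescaled Hermite eigenfunctions are classical, and the tensor-product structure of $L^2(\psi_\infty)$ gives the Minkowski-sum spectrum directly. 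Your approach is more elementary and self-contained; the paper's heavier machinery is not needed here, but it is the natural tool when the Hamilton map is not diagonalisable by an orthogonal change of variables (e.g.\ for non-reversible or kinetic Langevin dynamics, as in the cited references [\refcite{Lelievre,ott}]), which is presumably why the authors reached for it. Two minor points worth tightening in a final version: the claim that the tensor-product Hermite eigenfunctions exhaust the spectrum should be phrased as ``the spectrum is pure point and equals the closure of the set of eigenvalues'' (for finite $N$ with all $\lambda_i > 0$ the set $\{\sum_i k_i\lambda_i\}$ is already closed and discrete, so nothing is lost, but this deserves a sentence); and the equality $\sigma(\mathcal{L}_P^*) = \sigma(\mathcal{L}_P)$ is cleanest to justify by noting that the map $\phi \mapsto \psi_\infty \phi$ is a unitary equivalence between $\mathcal{L}_P$ on $L^2(\psi_\infty)$ and $\mathcal{L}_P^*$ on $L^2(\psi_\infty^{-1})$, both of which are self-adjoint on their respective spaces.
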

\begin{proof}
   See Section~\ref{prtheo2}.
\end{proof}

An immediate consequence of Theorem \ref{spec} is that
\begin{equation} \label{eq:lambdaP}
   \lambda_P = \inf \sigma(H_P) \setminus \{0\},
\end{equation}
in other words, the smallest non-zero generalised eigenvalue of
\begin{equation*}
      H v = \lambda P v.
\end{equation*}

Returning to the quadratic model Hamiltonians and model preconditioners
introduced in Sections~\ref{sec:model_hamiltonians} and~\ref{sec:model_precond},
assume that $\min \sigma(H) \sim N^{-s}$ (which is consistent with
\eqref{eq:spectrum-dimension}), while $\min\sigma(H_P) \sim 1$, then we obtain
that $\lambda_I \sim N^{-s}$ with $\lambda_P \sim 1$. This result should give
us confidence in the value of preconditioning.

 \begin{remark}
The preconditioning ideas and results presented in Sections~\ref{preli} and~\ref{convin} are similar to the Brascamp--Lieb inequality [\refcite{brasc}]. In some sense, this inequality claims that a good preconditioner is the Hessian.

Precisely [\refcite{Lelievre:17}], if $E_N$ is strictly convex, for any function $f \in L^2(\e^{-E_N})$,
$$\int \left[f - \int f \e^{-E_N} \right]^2 \e^{-E_N} \le \int \nabla f (\nabla^2 E_N)^{-1} \cdot \nabla f \e^{-E_N}, $$
where we assume here that the normalization $\int \e^{-E_N} = 1$. This means that if one considers the Fokker--Planck equation
$$\partial_t \psi = {\rm div} [ (\nabla^2 E_N)^{-1} \e^{-E_N} \nabla ( \psi \e^{E_N}) ],$$
which is associated to the following overdamped Langevin dynamics [\refcite{xifa}]:
$$dX_t = - (\nabla^2 E_N)^{-1} \nabla E_N (X_t) dt + {\rm div} [ (\nabla^2 E_N)^{-1} ] (X_t) dt + \sqrt{2 (\nabla^2 E_N)^{-1}}(X_t) dW_t, $$
then, if $E_N$ is strictly convex, we have
$$\begin{aligned}
\frac{1}{2} \frac{d}{dt} \int \left(\frac{\psi}{\e^{-E_N}} - 1\right)^2 \e^{-E_N}  & = - \int (\nabla^2 E_N)^{-1} \nabla \left( \frac{\psi}{\e^{-E_N}} \right) \cdot \nabla \left( \frac{\psi}{\e^{-E_N}}\right)  \e^{-E_N}\\
&\le
\int \left(\frac{\psi}{\e^{-E_N}} - 1\right)^2 \e^{-E_N}.
\end{aligned}$$
And thus
$$\int \left(\frac{\psi}{\e^{-E_N}} - 1\right)^2\e^{-E_N} \le \left[ \int \left(\frac{\psi_0}{\e^{-E_N}} - 1\right)^2 \e^{-E_N}\right ] \e^{-2t}, $$
whatever the potential $E_N$ is. It is in some sense ``universal''. For example, it does not depend on the temperature: if we multiply $E_N$ by a constant $\beta$ (inverse of the temperature) it remains the same, whereas understanding the dependency of the spectral gap on the temperature is tricky in general.

\end{remark}

\section{Analysis of the asymptotic variance} \label{clte}
In this section we present sufficient conditions under which the estimator
$$
   \epsilon_T^P(f)
   = \frac{1}{T} \int_0^T f(X_t^P) dt
   = \frac{1}{T}\int _0^T f(P^{-1/2}Z_t)dt
$$
satisfies a central limit theorem of the form \eqref{tcl2} and we characterise the
associated asymptotic variance.

\subsection{Generalities}

The fundamental requirements to prove the central limit theorem is establishing the well-posedness of the Poisson equation
\begin{equation}\label{poisson2}
-\mathcal{L}_P\phi(z)=f(P^{-1/2}z)-\mu(f),\,\, \mu(\phi)=0,
\end{equation}
for all bounded and continuous functions $f: \R^N\rightarrow \R $, where $\mathcal{L}_P $ is defined by \eqref{ops}, and obtaining estimates on the growth of the unique solution $\phi$. Recall that we shall assume that $\mu$ admits a smooth, strictly positive density denoted by $\psi_\infty (z)$, such that $\int_{\R^N}\psi_\infty  (z)dz = 1$ and the SDE \eqref{pmala2} has a unique strong solution.

Referring to results in [\refcite{glynn,meyn}] we suppose that the process $Z_t$  admits a Lyapunov function (see the Definition~\ref{def} in Section~\ref{prfpoi}),
which is sufficient to ensure the geometric ergodicity of $Z_t$ (see [\refcite{mattin,twe}]). In terms of the
potential energy $E_N$ and the preconditioner $P$, we require that there exists
$\beta \in (0, 1)$ such that
\begin{equation} \label{eq:coercivity-assumption}
\displaystyle\lim_{|z|\rightarrow +\infty }\inf \left[(1-\beta)|\nabla E_N^P(z)|^2+\Delta E_N^P(z)\right]>0.
\end{equation}
It is straightforward to check that this condition holds whenever $E_N$ is
strongly convex and in particular if it is of the form \eqref{eq:quad_model}.

If condition \eqref{eq:coercivity-assumption} holds, then the process $Z_t$ will be geometrically ergodic. More specifically, the law of the process $Z_t$  started from a point $z\in\R^N$ will converge exponentially fast in the total-variation norm to the equilibrium distribution $\mu$ (cf. \eqref{ergo2}).

Assuming \eqref{eq:coercivity-assumption}, we also obtain the following well-posedness result for the Poisson equation \eqref{poisson2}.

\begin{theorem} \label{th:poisson}
   Suppose that \eqref{eq:coercivity-assumption} holds, then there exists $c > 0 $,
   such that for any measurable observable $f$ satisfying  $|f|^2 \leq e^{-\beta E_N^P(z)}$, the Poisson equation \eqref{poisson2} admits a unique strong solution
 satisfying the
   bound $|\phi(z)|^2 \leq c e^{-\beta E_N^P(z)}$. In particular, $\phi(P^{1/2}
   x) \in L^2(\mu)$.
 \end{theorem}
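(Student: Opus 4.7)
The natural candidate is the resolvent formula
\[
   \phi(z) := \int_0^\infty \bigl[\E_z f(Z_t) - \mu(f)\bigr]\, dt,
\]
and the plan is to address in turn (i) a Foster--Lyapunov drift that justifies convergence of the integral, (ii) verification that $\phi$ solves the Poisson equation with $\mu(\phi)=0$, (iii) uniqueness, and (iv) the pointwise bound $|\phi|^2 \leq c\, e^{-\beta E_N^P}$, from which the claim $\phi(P^{1/2}\cdot)\in L^2(\mu)$ follows by a routine change-of-variable against the density $\psi_\infty$.

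The first step is to extract a Lyapunov drift from \eqref{eq:coercivity-assumption}. Computing $\mathcal{L}_P V$ for a trial function of the form $V(z) = e^{a E_N^P(z)}$ using \eqref{ops} gives
\[
\mathcal{L}_P V = a\bigl[-(1-a)|\nabla E_N^P|^2 + \Delta E_N^P\bigr]\,V,
\]
so for the appropriate exponent $a$ related to $\beta$, the hypothesis \eqref{eq:coercivity-assumption} forces $\mathcal{L}_P V \leq -\gamma V + K \mathbf{1}_{B_R}$ for some $\gamma,K,R > 0$. Together with the hypoellipticity and strong Feller property of the non-degenerate diffusion, this drift inequality delivers geometric ergodicity of $(Z_t)$ in a $V$-weighted total-variation sense via the standard Meyn--Tweedie--Hairer--Mattingly machinery [\refcite{twe,meyn,mattin}]. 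The defining integral for $\phi$ then converges absolutely, and formally differentiating under the integral while using the semigroup identity $\partial_t P_t f = \mathcal{L}_P P_t f$ gives
\[
-\mathcal{L}_P \phi(z) = -\int_0^\infty \partial_t P_t f(z)\, dt = f(z) - \mu(f),
\]
because $P_t f \to \mu(f)$ as $t\to\infty$. Invariance of $\mu$ under $P_t$ yields $\mu(\phi) = 0$. Uniqueness follows from the spectral gap in Theorem~\ref{th:exponential_convergence}: any two strong solutions differ by an element of $\ker(-\mathcal{L}_P)$, which reduces to the constants by irreducibility, and the normalisation fixes that constant at zero.

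The main technical obstacle is the pointwise decay bound $|\phi|^2 \leq c\, e^{-\beta E_N^P}$. A naive weighted-supremum estimate with $V$ runs in the wrong direction, since $V$ grows at infinity whereas both $f$ and the desired bound on $\phi$ express decay. To recover the correct decay I would follow a splitting argument in the spirit of Kopec and Kontoyiannis--Meyn [\refcite{glynn,meyn}]: writing
\[
  \phi(z) = \int_0^1 \bigl[P_t f(z) - \mu(f)\bigr]\,dt + \int_1^\infty \bigl[P_t f(z) - \mu(f)\bigr]\,dt,
\]
the short-time piece is controlled by concentration of the transition kernel of $Z_t$ near $z$ on $[0,1]$, which transfers the pointwise decay $|f|\leq e^{-\beta E_N^P/2}$ to a pointwise bound on $P_t f(z)$. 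The long-time piece is handled by combining the $V$-weighted ergodicity estimate with the Lyapunov moment control $\E_z V(Z_t) \leq e^{-\gamma t}V(z) + C$ evaluated against the dual weight $V^{-1/2}$, producing the required exponential-in-$t$ decay with prefactor $e^{-\beta E_N^P(z)/2}$. Pairing the two bounds yields the claim, and integrating $|\phi|^2$ against $\psi_\infty$ then identifies $\phi(P^{1/2}\cdot) \in L^2(\mu)$ because $\int e^{-(1+\beta)E_N^P(z)}\, dz < \infty$.
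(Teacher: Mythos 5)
Your overall architecture matches the paper's: verify a Foster--Lyapunov drift for the generator $\mathcal{L}_P$ using the exponential Lyapunov function $V(z)=e^{\beta E_N^P(z)}$ (your drift computation $\mathcal{L}_P V = a[-(1-a)|\nabla E_N^P|^2 + \Delta E_N^P]V$ is exactly the paper's Lemma in Section~\ref{prfpoi}, and in fact your sign in front of $\Delta E_N^P$ appears to be the correct one), and then appeal to the Glynn--Meyn / Bhattacharya framework for well-posedness of the Poisson equation. The paper's own proof is terse --- it checks the drift and cites ``Theorem 3.2'' of a reference --- while you unpack the resolvent formula, the identification $-\mathcal{L}_P\phi = f - \mu(f)$, the centering $\mu(\phi)=0$, and uniqueness. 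All of that is fine and standard.

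The genuine gap is in step (iv), the pointwise bound. You correctly observed that the Lyapunov machinery produces a \emph{growth} bound $|\phi| \lesssim V + {\rm const} = e^{\beta E_N^P} + {\rm const}$, not the \emph{decay} bound $|\phi|^2 \leq c\,e^{-\beta E_N^P}$ that the statement asserts, and you tried to repair this with a short-time/long-time splitting. That splitting does not work, and cannot: the decay bound is simply false whenever $\mu(f)\neq 0$. Writing $\phi(z)=\int_0^\infty[P_tf(z)-\mu(f)]\,dt$, for $|z|$ large and $t$ smaller than the equilibration time from $z$ one has $P_tf(z)\approx 0$ while $\mu(f)$ is a fixed nonzero constant, so the integrand sits near $-\mu(f)$ on a time window that grows with $|z|$; $\phi$ therefore grows at infinity rather than decays. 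Indeed, for the 1D Ornstein--Uhlenbeck example $E(x)=x^2/2$, $f(x)=e^{-\beta x^2/4}$, one can solve the Poisson ODE explicitly and finds $\phi(x)\sim -\mu(f)\log|x|$ as $|x|\to\infty$. Your ``concentration of the transition kernel'' and ``dual weight $V^{-1/2}$'' steps are not just heuristic, they are attempting to prove something untrue.

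What you should have concluded is that the growth bound $|\phi|\lesssim e^{\beta E_N^P}$ delivered directly by Glynn--Meyn is already what is needed for the part of the statement that matters downstream, namely $\phi(P^{1/2}\cdot)\in L^2(\mu)$: one has $\int|\phi|^2\psi_\infty\,dz \lesssim \int e^{(2\beta-1)E_N^P}\,dz<\infty$ once $\beta$ is taken small enough. The sign in the stated pointwise bound (and the mirrored sign in the paper's Lyapunov computation) is in all likelihood a typo in the paper, and the honest route is to prove and state the growth version rather than to fight for the decay version.
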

 \begin{proof}
 See Section~\ref{prfpoi}.
 \end{proof}

The technique of using a Poisson equation to obtain a central limit theorem for
an additive functional of a Markov process is widely known (see e.g.
[\refcite{bhatt}]). For linear and quadratic observables, we can in fact produce
an analytic solution to this Poisson problem; see Section~\ref{sec:linear_observables} below.

\begin{theorem}\label{thtcl}
   Under the conditions of Theorem~\ref{th:poisson}, there exists a constant
   $0< \sgfp < \infty$ such that the asymptotic distribution of the fluctuations
   of $\emp_t^P(f)$ about $\mu(f)$ are given by the central limit theorem
\begin{equation}\label{tcltheo}
\displaystyle \sqrt{t} \lp\emp^P_t(f)-\mu(f)\rp \xrightarrow{D} \mathcal{N}(0,\sgfp),\text{ as } t\rightarrow +\infty,
\end{equation}
where $\sgfp$ (the asymptotic variance for the observable $f$) is given by
 \begin{equation}\label{sigmap}
    \begin{split}
      \sgfp
      &= 2 \int \big| \nabla_z \phi(z) \big|^2 \mu^P(dz)  \\
      &= 2\int \big|\nabla_z \phi(P^{1/2} x)\big|^2 \mu(dx),
   \end{split}
\end{equation}
where $\mu^P(dz) = \psi_\infty (z) dz.$
\end{theorem}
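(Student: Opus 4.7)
The plan is to reduce the CLT to the martingale CLT applied to a stochastic integral obtained from the Poisson equation. Let $\phi$ be the unique strong solution of \eqref{poisson2} provided by Theorem~\ref{th:poisson}, with the bound $|\phi(z)|^2 \leq c\, \e^{-\beta E_N^P(z)}$, which in particular ensures $\phi \in L^2(\mu^P)$ together with its derivatives via standard elliptic regularity applied to the generator $\mathcal{L}_P$. First I would apply It\^o's formula to $\phi(Z_t)$, using that the diffusion coefficient of $Z_t$ in \eqref{pmala2} is the identity, to obtain
\begin{equation*}
   \phi(Z_t) - \phi(Z_0) = \int_0^t \mathcal{L}_P \phi(Z_s)\, ds + \sqrt{2}\int_0^t \nabla_z\phi(Z_s)\cdot dW_s.
\end{equation*}
Substituting the Poisson equation $-\mathcal{L}_P\phi(z) = f(P^{-1/2}z) - \mu(f)$ and dividing by $\sqrt{t}$ yields the decomposition
\begin{equation*}
   \sqrt{t}\bigl(\emp_t^P(f) - \mu(f)\bigr)
   = \frac{\phi(Z_0) - \phi(Z_t)}{\sqrt{t}} + \frac{1}{\sqrt{t}} M_t,
   \qquad M_t := \sqrt{2}\int_0^t \nabla_z\phi(Z_s)\cdot dW_s.
\end{equation*}

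Next I would dispose of the boundary terms. Since $\phi \in L^2(\mu^P)$ and the process is geometrically ergodic under \eqref{eq:coercivity-assumption}, the law of $\phi(Z_t)$ is tight for $t \geq 0$, and likewise for $\phi(Z_0)$ under any admissible initial condition, so $t^{-1/2}(\phi(Z_0) - \phi(Z_t)) \to 0$ in probability. What remains is to identify the limit of $t^{-1/2} M_t$. Its quadratic variation is
\begin{equation*}
   \langle M\rangle_t = 2\int_0^t |\nabla_z \phi(Z_s)|^2\, ds,
\end{equation*}
and by the ergodicity of $Z_t$ with respect to $\mu^P$ we have
\begin{equation*}
   \frac{\langle M\rangle_t}{t} \xrightarrow{a.s.} 2\int |\nabla_z\phi(z)|^2\, \mu^P(dz) =: \sgfp.
\end{equation*}
The integrability of $|\nabla_z\phi|^2$ against $\mu^P$ follows from the Lyapunov-type bound for $\phi$ together with interior elliptic estimates for $-\mathcal{L}_P\phi \in L^2(\mu^P)$; this is in fact the main technical point of the argument, because without a quantitative growth bound on $\nabla_z\phi$ one cannot close the ergodic average, and it is what forces the observable condition $|f|^2 \leq \e^{-\beta E_N^P}$ to enter.

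Finally I would invoke the martingale central limit theorem (e.g.\ the version in \refcite{bhatt} or the standard result for continuous martingales with deterministic limiting quadratic variation) to conclude
\begin{equation*}
   \frac{M_t}{\sqrt{t}} \xrightarrow{D} \mathcal{N}(0, \sgfp),
\end{equation*}
and combine with Slutsky's theorem and the vanishing of the boundary terms to obtain \eqref{tcltheo}. The equivalence of the two formulas for $\sgfp$ in \eqref{sigmap} is simply the change of variables $z = P^{1/2}x$, using $\mu^P = (\det P)^{-1/2}\varphi_\infty(P^{-1/2}\cdot)dz$. Positivity $\sgfp > 0$ holds whenever $f$ is non-$\mu$-a.e.\ constant, since otherwise $\phi$ would be constant and the Poisson equation would force $f = \mu(f)$. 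The hard part, as indicated, is the derivative bound on $\phi$ needed to turn the almost-sure convergence of $\langle M\rangle_t/t$ into the identification of the limit and to guarantee $\sgfp < \infty$; everything else is a routine application of It\^o calculus, ergodicity, and the martingale CLT.
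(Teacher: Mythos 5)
Your argument is essentially the paper's own proof: both decompose $\emp_t^P(f)-\mu(f)$ via the Poisson equation into the boundary term $t^{-1}(\phi(Z_0)-\phi(Z_t))$ plus the martingale $\sqrt{2}\,t^{-1}\int_0^t\nabla_z\phi(Z_s)\cdot dW_s$, show the former is negligible after rescaling by $\sqrt{t}$ (the paper via $\phi\in L^2(\mu)$ when $Z_0\sim\mu$ and a propagation-of-chaos argument otherwise, you via tightness from geometric ergodicity), and apply the martingale CLT with the ergodic identification of $\langle M\rangle_t/t$. You are more explicit than the paper about the need to verify $\nabla_z\phi\in L^2(\mu^P)$ before invoking the martingale CLT, which is a genuine technical point the paper elides; otherwise the two proofs coincide.
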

\begin{proof}
   See Section~\ref{ann:tcl}.
\end{proof}

\subsection{Explicit solution for linear observables}

\label{sec:linear_observables}

In this section we exhibit explicit solutions $\phi$ of the Poisson equation
\eqref{poisson2} when $E_N$ is quadratic and $f$ is linear, and compute the
associated asymptotic variance $\sgfp$. This simplest possible case is
of course well-known but we summarise it nevertheless to prepare for more
interesting cases.

Suppose, therefore, that
\begin{equation*}
   P = I, \quad E_N(x) = \frac12 x^T H x \quad \text{and} \quad
   f(x) = v \cdot x,
\end{equation*}
where $v \in \R^N$. From symmetry it follows that $\mu(f) = 0$,  hence the
Poisson equation \eqref{poisson2} becomes
\begin{equation}\label{appois2}
   Hx \cdot \nabla\phi(x) - \Delta\phi(x)= v \cdot x.
\end{equation}
Seeking a solution of the form $\phi(x) = d \cdot x$ with $d \in \R^N$, we
obtain $d = H^{-1} v$, i.e.,
$$
   \phi(x) = x \cdot H^{-1} v,
$$
and in particular,
\begin{equation*}
   \sgf = \sigma_{f,I}^2 = 2\int |H^{-1} v|^2 \mu(dx) = 2|H^{-1} v|^2.
\end{equation*}
In particular, choosing $v$ to be a normalised eigenmode of $H$ with associated
eigenvalue $\lambda$, we obtain $\sgf = 2\lambda^{-2}$.

Focusing specifically on a $d$-dimensional lattice model, we know from
\eqref{eq:spectrum-dimension} that $\min \sigma(H) \sim N^{-2/d}$ while
$\max\sigma(H) \sim 1$. Hence, it follows that $\sgf$ is moderate for the
high-frequency eigenmodes, but large for the observables corresponding to
low-frequency eigenmodes.

Next, we turn to the preconditioned dynamics. In this case we effectively
replace $H$ with $H_P = P^{-1/2} H P^{-1/2}$ and $v$ with $P^{-1/2} v$
and thus obtain
\begin{equation} \label{eq:sigfp-linear-v1}
   \sgfp = 2\int |H_P^{-1} P^{-1/2} v|^2 \mu(dx)
         = 2| P^{1/2} H^{-1} v |^2 =: 2|H^{-1} v|_P^2.
\end{equation}
If we assume that $c_0 = \min \sigma(H_P), c_1 = \max \sigma(H_P)$, then a
simple rewrite yields
\begin{equation} \label{eq:sgfp-bounds-Hinv}
   2c_0 v^T H^{-1} v \leq \sgfp \leq 2c_1 v^T H^{-1} v.
\end{equation}
Comparing with $\sgf =2v^T H^{-2} v$ and recalling our standing assumption
\eqref{eq:bounds:HP} that  $c_0 \sim 1, c_1 \sim 1$ (the spectrum is bounded
above and below independently of $N$), we conclude that preconditioning does not
entirely remove ill-conditioning but it is (potentially) diminished.

More concretely, for a $d$-dimensional lattice system, we obtain that
$$
     N^{-2/d} \lesssim \frac{\sgfp}{\sgf} \lesssim 1,
$$
and both bounds are attained for specific observables. We conclude that
preconditioned Langevin {\em can} be significantly more efficient than standard
Langevin (low-frequency observables) but that it will be {\em comparable in
efficiency} for high-frequency observables.

Intuitively, low-frequency observables are ``macroscopic'' in nature and include
e.g. energy, average bond-length etc., while high-frequency observables
include in particular single bonds, bond angles and dihedral angles (in a large molecule) or
a bond-length near a crack-tip. In the next sections, we consider three toy
models mimicking ``realistic'' observables of these kinds, occurring
in real-world simulations, to further substantiate our remarks.

\subsection{Example 1: Energy per particle}
\label{sec:energy}
We now consider $E_N(x) = \frac12 x^T H x$ and $f(x) = N^{-1} E_N(x)$. A straightforward
computation yields
\begin{equation}
   \label{lem-1}
   \< x \cdot Bx \>_\mu
   = \tr(H^{-1} B) \qquad \text{for } B \in \R^{N \times N},
\end{equation}
which in particular implies that
$$
   \mu(f) = \frac{1}{N} \frac{\int E_N(x) e^{- E_N(x)}}{ \int e^{-E_N(x)}}
         = \frac{\tr I}{2N} = \frac{1}{2}.
$$
Thus, the Poisson equation becomes
$$
   H x \cdot \nabla \phi(x) - \Delta \phi(x)
      = {\textstyle \frac{1}{2N}} x^T H x - {\textstyle \frac{1}{2}}
$$
We seek a solution of the form $\phi(x) = \frac12 x^T B x + l \cdot x -\tr B$ (to ensure that $\mu(\phi) = 0$), then
this yields the equation
$$
   Hx \cdot (Bx + l) - \tr B = {\textstyle \frac1{2N}} x^T H x - {\textstyle \frac{1}{2}}.
$$
This is satisfied for $B = \frac1{N} I, l = 0$, hence $\phi(x) = \frac1{2N} |x|^2 -\frac{1}{2}$.

We can now compute the asymptotic variance as
$$
   \sgf = \sigma_{f,I}^2 =2 \int \big|{\textstyle \frac1{N}} x\big|^2 \mu(dx)
      = {\textstyle \frac{2}{N^2}} \tr H^{-1}.
$$

Repeating the same argument in transformed coordinates $z = P^{1/2} x$, we also obtain the asymptotic
variance of the energy for the preconditioned Langevin dynamics:
$$
   \sgfp = {\textstyle \frac2{N^2}}  \tr H_P^{-1}
         = {\textstyle \frac2{N^2}} \tr \big( H^{-1} P \big).
$$

Let us now focus on a lattice model, where we have \eqref{eq:spectrum-dimension}.
Then we obtain that
$$
  \frac{1}{2} \sgf \approx N^{-2} \sum_{j = 1}^N (j/N)^{-2/d}
      \approx N^{-1} \int_{1/N}^1 s^{-2/d} ds \approx
      \begin{cases}
         1, & d = 1, \\
         N^{-1} \log N, & d = 2, \\
         N^{-1}, & d = 3,
      \end{cases}
$$
while, clearly, $\frac{1}{2}\sgfp \approx N^{-1}$. In summary,
$$
   \frac{\sgf}{\sgfp} \approx \begin{cases}
      N, & d = 1, \\
      \log N, & d = 2, \\
      1, & d = 3;
   \end{cases}
$$
that is, preconditioning only gives a significant speed-up in one-dimensional
lattices but not in two- or three-dimensional lattices.

\subsection{Example 2: Bond-length}
\label{sec:bondlength}
In our second example we observe a single bond in the crystal or
molecule. That is, we still use $E_N(x) = \frac12 x^T H x$ but the observable
is now given by
$$
   f(x) = x_{i} - x_j \qquad \text{for some fixed bond $i \sim j$}.
$$
This is a linear observable, hence a special case of the discussion in
Section~\ref{sec:linear_observables}. Hence, we obtain
$$
   \sgf = 2|H^{-1} l|^2 \qquad \text{where} \quad l_n =
   \begin{cases}
      1, & n = i \\
      -1, & n = j, \\
      0, & \text{otherwise.}
   \end{cases}
$$

In order to estimate $\sgf$ further we consider again the $d$-dimensional
lattice model \eqref{eq:quad-lattice-model} and $P$ given by
\eqref{eq:precond_nonlinear}. For $d \geq 2$, since $P$ is a homogeneous discrete
elliptic operator, we know from [\refcite{olson}] that
$$
   \big| [P^{-1}]_{ni} - [P^{-1}]_{nj} \big| \leq C (1 + |n-i|)^{-d},
$$
where we note that $i,j$ are now neighbouring lattice sites; i.e.,
$[P^{-1}]_{ni} - [P^{-1}]_{nj}$ denotes a discrete gradient of the lattice
Green's function.

Therefore, we obtain that
\begin{equation} \label{eq:sgf-bound-bonglength}
   \sgf = 2|H^{-1} l|^2 \lesssim 2|P^{-1} l|^2
    \lesssim 2\sum_{n \in \mathbb{Z}^d} (1+|n-i|)^{-2d} < \infty;
\end{equation}
that is, $\sgf$ has an upper bound that is independent of $N$. A
lower bound follows simply from the fact that $\|H\| \leq 1$ and hence
$v^T H^{-1} v \geq |v|^2$, which implies
$$
   \sgf \geq |l|^2 = 1.
$$

To obtain bounds on $\sgfp$, we use
\eqref{eq:sgfp-bounds-Hinv} to estimate
$$
   2c_0 = 2c_0 |l|^2 \leq \sgfp \leq 2c_1 l^T H^{-1}  \leq
   2c_1 |l| |H^{-1} l|,
$$
and we have already shown in \eqref{eq:sgf-bound-bonglength} that this is
bounded above, independently of $N$.

In summary, we obtain that
$$
   \frac{\sgf}{\sgfp} \lesssim 1 \qquad \text{for } d \geq 2,
$$
that is, we expect no substantial (if any) speed-up for the bond-length
observable from preconditioning for $d \geq 2$.

By contrast, for $d = 1$, the system $P^{-1} l$ can be solved explicitly,
and in this case one obtains $\sgf \sim N$ as $N \to \infty$ (specifically, $|P^{-1} l|^2 = N/12$), that is,
$$
   \frac{\sgf}{\sgfp} \sim N \qquad \text{for } d = 1.
$$

Thus, we conclude that preconditioning helps to accelerate the computation of the
bond-length observable only for one-dimensional structures. The intuitive
explanation of this effect is that far-away regions of space have little
influence on a single bond and hence only local equilibration matters.
The difference in 1D is that elastic interaction is naturally more long-ranged
than in dimension $d > 1$.

\subsection{Example 3: Umbrella sampling}
\label{sec:umbrella}
Our final example is inspired by a technique called umbrella sampling [\refcite{umbrella}].
Given a potential energy $E_N(x)$ and a reaction coordinate $\xi(x)$, we wish
to compute
$$
   A(\xi_0) := - \log \int e^{-E_N(x)} \delta(\xi(x)-\xi_0) \,dx.
$$
Umbrella sampling achieves this by placing a restraint on the potential energy,
$$
   E_{N,K}(x) := E_N(x) + {\textstyle \frac{K}2} \big( \xi(x) - \xi_0 \big)^2,
$$
for some $K > 0$, $\xi_0 \in \R$.
Let $\mu_{K}$ denote the corresponding equilibrium measure,
then it can be shown [\refcite{umbrella}] that, for $K$ large,
$$
   \frac{\partial A}{\partial \xi}\Big|_{\xi = \bar{\xi}_0}
   \approx - K (\bar{\xi}_0  - \xi_0) \qquad \text{where }
   \bar{\xi}_0 = \< \xi \>_{\mu_K}.
$$
Thus, $\partial_\xi A$ and hence $A$ can be reconstructed in this way.
More sophisticated variations of the idea exist of course, but for the sake
of simplicity of presentation we will focus on this particularly simple
variant.

To construct an analytically accessible toy problem mimicking umbrella
sampling we consider again a quadratic energy $E_N(x) = \frac12 x^T H x$
and a linear reaction coordinate $\xi(x) := l \cdot x$ where, for simplicity,
we assume that $|l| = 1$ (for $|l| = O(1)$, the argument is analogous).
The restrained potential with penalty parameter $K > 0$  is then
given by
$$
   E_{N,K}(x) = {\textstyle \frac12} x^T H x
            + {\textstyle \frac{K}2} (\xi(x) - \xi_0)^2,
$$
for some $\xi_0 \in \R$, while the observable from which we can reconstruct the mean force is
simply
$$
   f(x) = K \big(l \cdot x - \xi_0\big).
$$
We are again in the context of Section~\ref{sec:linear_observables}
and therefore obtain
$$
   \sgf = 2K^2 \big| H_K^{-1} l \big|^2,
$$
where
$$
   H_K = \nabla^2 E_{N,K} = H + K l l^T.
$$
The Sherman--Morrison formula yields
\begin{equation} \label{eq:sherman-morrison}
   H_K^{-1} l = \bigg( H^{-1} - \frac{K H^{-1} l l^T H^{-1}}{1 + K l^T H^{-1} l} \bigg) l =  \frac{H^{-1} l}{1 + K l^T H^{-1} l},
\end{equation}
and hence,
\begin{align*}
   \sgf = 2K^2 \big|H_K^{-1}l\big|^2
      = \frac{2K^2 |H^{-1} l|^2}{(1 + K l^T H^{-1} l)^2}.
\end{align*}

Since our focus in the present example is the ill-conditioning induced by large
$K$ rather than ill-conditioning induced by $H$ (e.g. through system size $N$),
let us assume that $\max \sigma(H) = 1$ (as always) while $\min \sigma (H) \geq
c_0$, for some moderate constant $c_0$. This would, e.g., be the typical
situation for a small molecule, or if we preconditioned $H$ but without
accounting for the umbrella. We then obtain
\begin{align*}
   c_0^2 \leq \frac{2K^2 c_0^2}{(1 + K)^2} \leq \sgf \leq \frac{2K^2}{(1+ K c_0)^2} \leq \frac{2}{c_0^2};
\end{align*}
that is, $\frac{1}2\sgf \sim 1$ as $K \to \infty$.

By contrast, suppose now that we choose a preconditioner
$$
   P_K := P + K l l^T,
$$
where $P$ is a preconditioner for $H$ satisfying $c_0^P v^T P v \leq v^THv \leq v^T
P v$. Then a straightforward calculation yields
$$
   c_0^P \leq
   \frac{v^T H v + K (l \cdot v)^2}{v^T P v + K (l \cdot v)^2}
   = \frac{v^T H_K v}{v^T P_K v} \leq 1.
$$
It follows from \eqref{eq:sgfp-bounds-Hinv} that
$$
   \sgfp = 2K^2 \big| H_K^{-1} l \big|_{P_K}^2
   % = \big| \big[P_K]_{H_K} H_K^{-1/2} l \big|^2
   \approx 2K^2 |H_K^{-1/2} l|^2,
$$
where $\approx$ now indicates upper and lower bounds with constants
that are independent of $K$.

Using \eqref{eq:sherman-morrison} we obtain
$$
   \sgfp \approx 2K^2 \frac{l^T H^{-1} l}{1 + K l^T H^{-1} l} \sim K
   \qquad \text{as $K \to \infty$.}
$$

We can therefore conclude that
$$
   \frac{\sgfp}{\sgf} \approx
    \frac{(l^T H^{-1} l)(1 + K l^T H^{-1} l)}{|H^{-1} l|^2} \sim 1+K
    \qquad \text{as $K \to \infty$;}
$$
that is, preconditioning the umbrella  actually achieves a significant {\em
deterioration} of the asymptotic variance and thus the $P$-Langevin dynamics
actually becomes {\em less efficient} than the standard Langevin dynamics.
However note the following crucial remark:

\begin{remark}[Step-sizes revisited] \label{rem:stepsizes}
   The surprising result of the present section does not in fact fully
   fall within our starting assumptions. While $\|H\| = 1$, $\|H_K\|$ is in fact
   of order $O(1 + K)$ which means that the time-step for the discretisation of
   the Langevin equation should be of order $O(K^{-1})$, which exactly
   balances the lower mixing of the preconditioned dynamics and make the
   two schemes again comparable.  Indeed, in our computational examples
   we will need to choose $\Delta t = O(K^{-1})$ to prevent instability.

   In practice, the restraint parameter $K$ is chosen of
   the same order of magnitude of the stiffest bond in a molecule, while the
   reaction coordinate will normally be a function of the softest bonds and hence it
   would create no additional time-step restriction. In such a situation,
   it is indeed preferable to {\em not} precondition the restraint.

    However, we emphasise again that the interaction between preconditioning and
    time-stepping is an issue that we do not properly address in the present
    work and which will require further attention in the future.
\end{remark}

\section{Numerical Tests}\label{test}
\label{sec:numtests}
We conclude our discussion by demonstrating the extension of our
explicit computations to  a mildly non-linear lattice model. As potential
energy $E_{M^d} : \R^{\{1,\dots,M\}^d} \to \R$, we choose
$$
   E_N(u) := \sum_{\alpha_1=0}^M \cdots \sum_{\alpha_d = 0}^M
         \sum_{j = 1}^d  \phi\Big(\smfrac{1}{\sqrt{d}} \big(u(\alpha+e_j) - u(\alpha)\big)\Big),
$$
where $u(\alpha) := 0$ if any $\alpha_j \in \{0, M+1\}$, and
with convex nearest-neighbour pair potential
$$
   \phi(r) = {\textstyle \frac18} \big( r^2 + \sin(r)^2 \big).
$$
Upon choosing an arbitrary linear labelling of indices $\alpha \in \{1,\dots,M\}^d$,
this is a special case of~\eqref{eq:nonlinear_model}. As preconditioner we choose
\eqref{eq:precond_nonlinear}, which we can also write as
$$
   \< Pv, v \> = \frac1{4 d} \sum_{\alpha_1=0}^M \cdots \sum_{\alpha_d = 0}^M
         \sum_{j = 1}^d |u(\alpha+e_j) - u(\alpha)|^2.
$$
The occurrence of $d$ in the definitions of $E_N$ and $P$ ensures that $\| \nabla E_N\| \approx  \|P \| \approx 1$; moreover, since $\phi$ is strictly convex, we have
that ${\rm cond}((\nabla E_N(x))_{P})$ is bounded above independently of $N$;
cf. \eqref{eq:bounds:HP}.

For all simulations (with small modifications for the umbrella sampling
example) we choose
$$
   \delta = 0.1, \quad N_{\rm steps} = 10^5, \quad N_{\rm runs} = 400.
$$
We then use a Cholesky factorisation to compute $P$, i.e. $P = L L^T$, followed by
$$
   X_{n+1}^P = X_n^P - \delta P^{-1} \nabla E_N(X_N^P) + \sqrt{2 \delta} L^{-T} R_n,
   \quad \text{for } n = 1, \dots, N_{\rm steps},
$$
where $R_n \sim N(0, I)^N$. The estimate of the observable value is then given by
$\bar{f} = N_{\rm steps}^{-1} \sum_{n = 1}^{N_{\rm steps}} f(X_n^P)$.
We compute $N_{\rm runs}$ trajectories in order to estimate the
asymptotic variance from $N_{\rm runs}$ independent samples of $\bar{f}$.

For the umbrella sampling test, we choose $\xi(x) := l \cdot x$ to be the
bond-length observable again with $\xi_0 = 0.33$. The modification to the
energy and observable is then as described in Section \ref{sec:umbrella}.
With $\delta = 0.1$, the discretised Langevin dynamics turns out to be
unstable, hence we had to choose $\delta_K = 0.1 / K$ instead. To account for
this (see also Remark \ref{rem:stepsizes}), we study $K \sgf$ instead of $\sgf$ in
our tests. We perform the
umbrella sampling test only for $d = 2, N = 8^2$, since we focus here on
the magnitude on the restraint parameter $K$ rather than the
system size.

The results of the simulations are shown in table \ref{table}. The numbers
closely match the analytical predictions of Sections \ref{sec:energy},
\ref{sec:bondlength} and \ref{sec:umbrella}. To conclude this discussion
we only remark that we did {\em not} fine-tune step-sizes, which means one
could likely make small improvements to both the preconditioned and
unpreconditioned processes. However, we believe that the {\em trends}
across dimension, system size and restraint parameter are reliable.

In particular, we stress that even though the preconditioned variants often
have a smaller asymptotic variance,  often (in particular for $d = 3$) this
improvement is only by a moderate constant factor. Because only the trends are
reliable indicators in these tests only a successive improvement with increasing
$N$ or $K$ (e.g. as in the $d = 1$ tests) can be considered a success for the
preconditioned algorithm.

\begin{table} \centering
   {\bf Asymptotic Variance: Energy} \\[1mm]
\begin{tabular}{c|cc||c|cc||c|cc}
   \multicolumn{3}{c||}{$d = 1$} & \multicolumn{3}{c||}{$d = 2$} & \multicolumn{3}{c}{$d = 3$} \\
   $N^{\frac1d}$ & $\sgf$ & $\sgfp$ & $N^{\frac1d}$ & $\sgf$ & $\sgfp$ & $N^{\frac1d}$ & $\sgf$ & $\sgfp$ \\
   \hline
   8 & 3.7e-01 & 6.4e-02  & 4 & 1.1e-01 & 4.4e-02  & 4 & 2.8e-02 & 9.8e-03  \\
   16 & 3.6e-01 & 3.7e-02  & 8 & 3.2e-02 & 9.3e-03  & 6 & 8.0e-03 & 3.5e-03  \\
   32 & 3.0e-01 & 1.8e-02  & 16 & 1.0e-02 & 2.3e-03  & 9 & 2.3e-03 & 9.4e-04  \\
   128 & 1.7e-01 & 4.5e-03  & 32 & 2.8e-03 & 5.4e-04  & 13 & 7.9e-04 & 3.0e-04
\end{tabular}

\bigskip

{\bf Asymptotic Variance: Bond-length} \\[1mm]
\begin{tabular}{c|cc||c|cc||c|cc}
\multicolumn{3}{c||}{$d = 1$} & \multicolumn{3}{c||}{$d = 2$} & \multicolumn{3}{c}{$d = 3$} \\
$N^{\frac1d}$ & $\sgf$ & $\sgfp$ & $N^{\frac1d}$ & $\sgf$ & $\sgfp$ & $N^{\frac1d}$ & $\sgf$ & $\sgfp$ \\
\hline
8 & 2.4e+01 & 7.4e+00  & 4 & 3.2e+01 & 8.5e+00  & 4 & 1.9e+01 & 7.5e+00  \\
16 & 4.2e+01 & 7.2e+00  & 8 & 5.4e+01 & 9.5e+00  & 6 & 1.8e+01 & 7.6e+00  \\
32 & 7.6e+01 & 7.4e+00  & 16 & 7.2e+01 & 1.1e+01  & 9 & 1.9e+01 & 7.5e+00  \\
128 & 2.8e+02 & 7.0e+00  & 32 & 1.1e+02 & 1.1e+01  & 13 & 2.2e+01 & 7.7e+00
\end{tabular}

\bigskip

{\bf Asymptotic Variance: Umbrella Sampling} \\[1mm]
\begin{tabular}{c|cc}
\multicolumn{3}{c}{$d = 2, N^{1/2} = 8$} \\
$K$ & $K \sgf$ & $\sgfp$  \\
\hline
10 & 1.6e+01 & 1.8e+01  \\
20 & 3.5e+01 & 4.0e+01  \\
40 & 6.7e+01 & 7.7e+01  \\
80 & 1.3e+02 & 1.8e+02
\end{tabular}

\caption{Numerically estimated  asymptotic variances of the energy observable
(Section \ref{sec:energy}), the bond-length observable (Section
\ref{sec:bondlength}) and the restraint observable occurring in umbrella sampling
(Section \ref{sec:umbrella}). The nonlinear potential energy used in these tests
is described in Section \ref{sec:numtests}. All results match the
analytical predictions of Sections \ref{sec:energy}, \ref{sec:bondlength} and \ref{sec:umbrella}.}\label{table}
\end{table}

\section*{Conclusion}
In this paper we strived to develop an intuition what the effect of
preconditioning has on molecular simulations. The results are very mixed: it is
clear that preconditioning accelerates convergence of the probability
density functions to equilibrium (see Theorem \ref{th:exponential_convergence}
as well as the discussion in Section \ref{sec:linear_observables}), and this
necessarily implies accelerated convergence for {\em some} observables. However,
for many concrete observables of practical importance little (if any) benefit
can be gained. This was a surprising outcome for us and indicates that
alternative avenues need to be explored on how a priori information about the
analytical structure of  configuration space should be exploited in molecular
simulation.

We emphasize again that our (partially negative) conclusion, contrary
to much of the existing literature, is due to the
fact that we test the convergence of specific observables.
% In addition to this, we mention that it is actually unlikely that our energies are more complicated, we use very simple ones actually}.
 Moreover, we stress that we have only performed a  limited set of tests on
 highly simplified
toy models and a limited set of observables, while more realistic models may exhibit many features that we neglected.

\section{Proofs}
\label{sec:proofs}

\subsection{Proof of Proposition \ref{th:time-discretisation}}\label{prp}

\begin{proof}
The covariance of the invariant measure associated to the dynamics \eqref {eq:pol_discrete} is given by the following identity:

$$\begin{aligned}
C^P_\delta&=(I-  \delta P^{-1} H)C^P_\delta(I-  \delta HP^{-1})+2 \delta  P^{-1}\\
C^P_\delta&=C^P_\delta- \delta  P^{-1} HC^P_\delta- \delta  C^P_\delta HP^{-1}+ \delta ^2P^{-1}HC^P_\delta HP^{-1}+2 \delta  P^{-1}\\
P^{-1}HC^P_\delta + C^P_\delta HP^{-1}&= 2 P^{-1}+ \delta  P^{-1}HC^P_\delta HP^{-1}.
\end{aligned}$$
Expanding $C^P_\delta$, we have
 $$C^P_\delta\sim C_0+ \delta  C_1+\delta ^2C_2+ O( \delta ^3) ,$$
then one gets
$$\begin{aligned}
P^{-1}HC_0 + C_0HP^{-1}=2P^{-1}&\quad\Rightarrow C_0=H^{-1}\\
P^{-1}HC_1+C_1HP^{-1}= P^{-1}HP^{-1}&\quad\Rightarrow C_1=\frac{1}{2}P^{-1}\\
P^{-1}HC_2+C_2HP^{-1}=\frac{1}{2}P^{-1}HP^{-1}HP^{-1}&\quad\Rightarrow C_2=\frac{1}{4}P^{-1}HP^{-1}.
\end{aligned}$$
Proceeding by induction, one can therefore obtain:
$$\forall k\in \mathbb{N},\quad C_k=2^{-k}(P^{-1}H)^kH^{-1}.$$
Therefore $C^P_\delta$ can be rewritten as:
$$ C^P_\delta=\displaystyle\sum_{k=0}^{+\infty}\lp {\textstyle \frac{ \delta }{2}}P^{-1}H \rp^kH^{-1}. $$
Indeed, the sum  $\displaystyle\sum_{k\geq 0}C_k$ converges since $\frac{\delta  }{2}|P^{-1}H|<1.$ The covariance operator be can rewritten as
$$\begin{aligned}
C^P_\delta & = \left( \displaystyle\sum_{k=0}^\infty \left({\textstyle \frac{ \delta }{2}}P^{-1}H\right)^k\right)H^{-1}\\
& = \left(I-{\textstyle \frac{ \delta }{2}}P^{-1}H\right)^{-1}H^{-1}\\
&= (H-{\textstyle \frac{ \delta }{2}}HP^{-1}H)^{-1},
\end{aligned}$$
which concludes the proof.
\end{proof}

\subsection{Proof of Theorem \ref{th:exponential_convergence}}\label{prtheo1}

\begin{proof}
Under the assumptions on the potential $E_N$, see  A.19
in [\refcite{Villani}], the density $\psi_\infty $ satisfies a Poincar\'e inequality: there exists
$\lambda_P>0$ such that for all probability density functions $\phi $, we have:
\begin{equation}\label{pcr}
\displaystyle\int_{\R^N}\left| \frac{\phi}{\psi_\infty }-1 \right|^2\psi_\infty dz\leq \frac{1}{\lambda_P}\int_{\R^N}\left| \nabla\left(\frac{\phi}{\psi_\infty }\right) \right|^2\psi_\infty  dz.
\end{equation}

The optimal parameter $\lambda_P$ in \eqref{pcr} is the opposite of the smallest (in absolute value) non-zero eigenvalue of the Fokker-Planck operator $\mathcal{L}_P^* $, which is self-adjoint in $L^2(\R^N,\psi_{\infty}^{-1}dx)$.
 Thus the exponent $\lambda_P$ is the spectral gap of $\mathcal{L}_P^* $.

If $\psi_t^P$ is a solution of \eqref{fpfp}, therefore for all initial condition $\psi_0\in L^2(\psi_{\infty}^{-1})$, $\forall t\geq 0:$
\begin{equation}\label{eqm}
\displaystyle \frac{d}{dt}\|\psi_t^P-\psi_\infty \|_{L^2(\psi_{\infty}^{-1})}^2=-2\int_{\R^N}\left|\nabla\lp\frac{\psi_t^P}{\psi_\infty }\rp \right|^2\psi_\infty  dz.
\end{equation}
%where $||.||_{L^2(\psi_{\infty}^{-1})} $ denotes the norm in $L^2( \R^N,\psi_\infty^{-1}dz)$.
Indeed,
$$\begin{aligned}
\displaystyle \frac{d}{dt}\|\psi_t^P-\psi_\infty \|_{L^2(\psi_{\infty}^{-1})}^2&=\displaystyle \frac{d}{dt}\int \left |\psi_t^P-\psi_\infty \right |^2\psi_{\infty}^{-1}dz\\
&=2\int\pr_t\psi_t^P\lp\psi_t^P-\psi_\infty \rp\psi_{\infty}^{-1}dz\\
&=2\int \nabla\cdot\lp \nabla E_N^P(z)\psi_t^P+\nabla\psi_t^P\rp\lp\frac{\psi_t^P}{\psi_\infty }-1\rp dz\\
&=-2\int \lp \nabla E_N^P(z)\psi_t^P+\nabla\psi_t^P\rp\cdot \nabla\lp\frac{\psi_t^P}{\psi_\infty }\rp dz.
\end{aligned}$$
But we have
$$\begin{aligned}
\nabla E_N^P(z)\psi_t^P+\nabla\psi_t^P&=-\nabla \lp\ln \lp\psi_\infty \rp\rp\psi_t^P+\nabla\psi_t^P\\
&=-\frac{\nabla\psi_\infty \psi_t^P}{\psi_\infty }+\nabla\psi_t^P\\
&=\nabla\lp\frac{\psi_t^P}{\psi_\infty }\rp\psi_\infty ,
\end{aligned}$$
which yields \eqref{eqm}. Therefore, using \eqref{pcr},
$$\begin{aligned}
\displaystyle \frac{d}{dt}\|\psi_t^P-\psi_\infty \|_{L^2(\psi_{\infty}^{-1})}^2&\leq -2\lambda_P \int \left| \frac{\psi_t^P}{\psi_\infty }-1 \right|^2\psi_\infty  dz\\
&= -2\lambda_P \int \left| \psi_t^P-\psi_\infty \right|^2\psi_\infty^{-1} dz,
\end{aligned}$$
then
$$\|\psi_t^P-\psi_\infty \|_{L^2(\psi_{\infty}^{-1})}^2\leq \e^{-\lambda_P }\|\psi_0-\psi_\infty \|_{L^2(\psi_{\infty}^{-1})}^2.$$

\end{proof}

\subsection{Preliminaries for spectral analysis}

In the linear case, i.e $E_N^P(z)=\frac{1}{2}z^TH_Pz$, where $H$ and $P$ are symmetric positive definite and $H_P := P^{-1/2} H P^{-1/2}$, the analysis will be carried out in a suitable system of coordinates which simplifies the calculations and the proofs of the main theorems. For this reason, we will perform one conjugation and one additional change of variables .

From the partial differential equation point of view and in order to use standard techniques from the spectral analysis of partial differential equations, then it appears to be useful to work in $L^2(\R^N,dz;\C) $ instead of $L^2(\R^N,\psi_\infty  dz;\C) $.  The mapping $\phi\mapsto \psi_\infty ^{-1/2}\phi $ maps unitarily $L^2(\R^N,dz;\C) $  into $L^2(\R^N,\psi_\infty  dz;\C) $ with the associated transformation rules for the differential operators:
$$\e^{-\frac{1}{2}H_P}\nabla_z \e^{\frac{1}{2}H_P}=\nabla_z +\frac{1}{2}\nabla_z H_P.$$
Thus, the operator $\mathcal{L}_P= -\nabla_z E_N^P(z)\cdot\nabla_z +\Delta_z$ is transformed into
\begin{align}
\displaystyle \overline{\mathcal{L}}_P&=\e^{-\frac{1}{2}H_P}\mathcal{L}_P\e^{\frac{1}{2}H_P}\nonumber\\
&= \Delta_z-\frac{1}{4}|\nabla_z E_N^P(z)|^2+\frac{1}{2}\Delta_z E_N^P(z)\nonumber\\
&= \Delta_z-\frac{1}{4}z^T H_P^2z+\frac{1}{2}\tr(H_P).
\end{align}

The kernel of $ \overline{\mathcal{L}}_P$ is $\C\e^{-\frac{z^TH_Pz}{4}} $ and the operator $\overline{\mathcal{L}}_P$ is unitarily equivalent to the operator $\mathcal{L}_P$.

In the goal of modifying the kernel of the operator $ \overline{\mathcal{L}}_P$ into a centered Gaussian with identity covariance matrix, we perform a second change of variables. In the following, we introduce the new coordinates $y = H_P^{1/2}z$, so that $\nabla_z=H_P^{1/2}\nabla_y$. Then the operator $ \overline{\mathcal{L}}_P$ becomes:
\begin{equation}
\displaystyle \tilde{\mathcal{L}}_P=\nabla_y^TH_P\nabla_y-\frac{1}{4}y^TH_Py+\frac{1}{2}\tr(H_P).
\end{equation}

The operator $\tilde{\mathcal{L}}_P$ is still acting in $L^2(\R^N,dz;\C) $. In the new coordinate system ($Y_t=H_P^{1/2}Z_t$), the corresponding stochastic process is:
$$ dY_t=-H_PY_tdt+\sqrt{2}H_P^{1/2}dW_t, $$
 so that ${\rm Ker}(\tilde{\mathcal{L}}_P)=\frac{1}{(2\pi)^{N/4}}\e^{\frac{|y|^2}{4}}.$ The last conjugation and change of variables are used to compute the spectrum of $\mathcal{L}_P^{*}$ needed to proof Theorem~\ref{spec} (see Section~\ref{prtheo2}).

Let us now introduce some additional notations. Recall the space of rapidly decaying complex valued $\mathcal{C}^{\infty}$ functions
$$\mathcal{S}(\R^N)=\displaystyle \left\{ f\in \mathcal{C}^{\infty}(\R^N),\forall\alpha,\beta\in \mathbb{N}^{N},\exists C_{\alpha,\beta}\in \R_+, \sup_{x\in\R^N} |x^\alpha\partial_x^\beta f(x) |\leq C_{\alpha,\beta} \right\}, $$
and its dual is denoted $\mathcal{S}'(\R^N)$.

The Weyl-quantization $q^W(x,D_x) $ of a symbol $q(x,\xi)\in \mathcal{S}'(\R^N) $ is an operator defined by its Schwartz-kernel
$$\displaystyle \left[q^W(x,D_x)\right](x,y)=\int_{\R^N}\e^{i(x-y)\cdot\xi }q\left( \frac{x+y}{2},\xi \right)\frac{d\xi}{(2\pi)^N}. $$
For instance, the Weyl symbol of of the operator
$$-\tilde{\mathcal{L}}_P+\frac{1}{2}\tr(H_P)= -\nabla_y^TH_P\nabla_y+\frac{1}{4}y^TH_Py$$
is
\begin{equation}\label{wey}
q(y,\xi)=\xi^TH_P\xi+\frac{y^TH_Py}{4}.
\end{equation}
Those tools are essential to proof Theorem~\ref{spec} (see Section~\ref{prtheo2}). For more details on Weyl-quantization, one can refer to [\refcite{ott}].

\subsection{Proof of Theorem \ref{spec}}\label{prtheo2}

\begin{proof}
Referring to Theorem~1.2.2 in [\refcite{hit}], the spectrum of the operator $ q^W(y,D_y)=-\tilde{\mathcal{L}}_P+\frac{1}{2}\tr(H_P)$ associated with the elliptic
quadratic Weyl symbol $q(y,\xi) $ defined by \eqref{wey} is given by
$$\sigma(q^W(y,D_y))=\displaystyle\left\{ \sum_{\substack{\lambda\in \sigma(G)\\ {\rm Im}\lambda\geq 0}}-i\lambda( r_\lambda+2k_\lambda), k_\lambda\in \mathbb{N}\right\}.
 $$
where $G$ is the so-called Hamilton map associated with $q$ , and $r_\lambda$ is the algebraic multiplicity of $\lambda\in \sigma(G)$ (the dimension of the characteristic space). The Hamilton map is the $\C$-linear map $G: \C^{2N}\rightarrow \C^{2N} $ associated with the matrix

$$G=\begin{bmatrix}
 0 & H_P   \\
 -\frac{1}{4}H_P& 0 \\
\end{bmatrix}\in \C^{2N\times 2N}. $$
 The matrix $G$ is similar to another matrix denoted $\overline{G}$ and defined by
$$\overline{G}=\begin{bmatrix}
 \frac{1}{\sqrt{2}} & 0   \\
 0& \sqrt{2} \\
\end{bmatrix}G\begin{bmatrix}
\sqrt{2}& 0   \\
 0&  \frac{1}{\sqrt{2}}  \\
\end{bmatrix}=\frac{1}{2} \begin{bmatrix}
 0 & H_P  \\
 -H_P& 0 \\
\end{bmatrix}. $$
Now, the characteristic polynomial of $G$ can be computed by\\
$$\begin{aligned}
\dtr (G-\lambda I)&=\dtr({\overline{G}-\lambda I })=2^{-2N}\left |\begin{array}{cc}
-2\lambda I &H_P\\
-H_P&-2\lambda I
\end{array}
\right |\\
&=2^{-2N}\left |\begin{array}{cc}
-2\lambda I &H_P\\
-H_P-i2\lambda I& i(H_P+i2\lambda I)
\end{array}
\right |\\
&=2^{-2N}\left |\begin{array}{cc}
i(-H_P+i2\lambda I ) &H_P\\
0& i(H_P+i2\lambda I)
\end{array}
\right |\\
&=2^{-2N}\dtr(H_P-i2\lambda I)\dtr(H_P+i2\lambda I).
\end{aligned}$$
Since ${\rm Re}(\sigma(H_P) )\geq 0$, one thus obtains that
$$\sigma(G)\cap \{\lambda, {\rm Im}\lambda\geq 0 \}= \frac{i}{2}\sigma(H_P). $$
In particular,
$$\displaystyle\sum_{\substack{\lambda\in \sigma(G)\\ {\rm Im}\lambda\geq 0}}-i\lambda 2k_\lambda=\sum_{\mu\in\sigma(H_P)}k_{\frac{i}{2}\mu}\mu $$
and
$$\sum_{\substack{\lambda\in \sigma(G)\\ {\rm Im}\lambda\geq 0}}-i\lambda r_\lambda=\frac{\tr(H_P)}{2}, $$
which concludes the proof of the theorem.
\end{proof}

\subsection{Proof of Theorem~\ref{th:poisson}}\label{prfpoi}

\begin{definition} (Foster-Lyapunov criterion)\label{def}\\
We say that the Foster-Lyapunov criterion holds for \eqref{pmala2} if there exists a function $U:\R^N \rightarrow \R $ and constants $C>0$ and $b\in \R$ such that $\mu(U)<\infty$,
\begin{equation}\label{foly}
 \mathcal{L}_PU(z) \leq -cU(z)+b\mathbf{1}_{C}
\end{equation}
and $U(z)\geq 1,\,z\in \R^N$, where $\mathbf{1}_{C}$ is the indication function over a petite Borel subset $C$ of $\R^N$ (refer to [\refcite{twe}] for more details).
\end{definition}
For the generator $\mathcal{L}_P$ corresponding to \eqref{ops}, compact sets are always petite.
In the following we prove that the Foster-Lyapunov criterion holds for \eqref{pmala2}. But first we need an assumption on the potential $E_N^P$.
\medskip

\textbf{Assumption A} There exists $k>0$ such that $\mu^P(z)$ is bounded from above for all $|z|\geq k$ and, for some $0<\beta<1$,
$$\displaystyle\lim_{|z|\rightarrow +\infty }\inf \left[(1-\beta)|\nabla E_N^P(z)|^2+\Delta E_N^P(z)\right]>0. $$

\begin{lemma}
Under Assumption A, the Foster-Lyapunov criterion holds for \eqref{pmala2} with:
\begin{equation}
U(z)=\e^{\beta E_N^P(z)}, \, 0<\beta<1.
\end{equation}
\end{lemma}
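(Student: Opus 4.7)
The plan is to verify each of the three ingredients of the Foster-Lyapunov criterion (Definition~\ref{def}) for the specific choice $U(z) = e^{\beta E_N^P(z)}$, and then to identify a suitable petite set $C$.

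\emph{Lower bound.} Without loss of generality we may shift $E_N^P$ so that $\inf_z E_N^P \geq 0$; this alters $U$ only by a positive multiplicative constant, leaves $\mathcal{L}_P$ untouched, and ensures $U(z)\geq 1$ for all $z$.

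\emph{Drift inequality.} A direct chain-rule computation gives $\nabla U = \beta(\nabla E_N^P)\,U$ and $\Delta U = \beta(\Delta E_N^P)\,U + \beta^{2}|\nabla E_N^P|^{2}\,U$, so substituting into $\mathcal{L}_P = -\nabla E_N^P\cdot\nabla + \Delta$ and collecting terms yields
$$
\mathcal{L}_P U(z) \;=\; -\beta\Bigl[(1-\beta)\,|\nabla E_N^P(z)|^{2} - \Delta E_N^P(z)\Bigr]\,U(z).
$$
By Assumption~A, the bracket is bounded below by a positive constant for $|z|\geq R$ with $R$ large enough. Fixing any such $R$ and taking $c$ to be (for instance) $\beta/2$ times that constant immediately gives $\mathcal{L}_P U(z) \leq -c\,U(z)$ on $\{|z|\ge R\}$. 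On the compact ball $\{|z|\leq R\}$, both $U$ and $\mathcal{L}_P U$ are continuous and hence bounded, so one can choose $b<\infty$ with $\mathcal{L}_P U(z) + c\,U(z) \leq b$ on this ball. The two bounds together yield \eqref{foly} with $C=\{|z|\leq R\}$.

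\emph{Integrability and petite set.} Since $\mu^P$ has density proportional to $e^{-E_N^P(z)}$, one has
$$
\mu(U) \;=\; \int_{\R^N} e^{\beta E_N^P(z)}\,\mu^P(dz) \;\propto\; \int_{\R^N} e^{-(1-\beta)E_N^P(z)}\,dz,
$$
which is finite because $0<\beta<1$ and because the coercivity in Assumption~A, together with the boundedness of $\mu^P$ for $|z|\geq k$, forces $E_N^P$ to grow sufficiently rapidly at infinity. Finally, the diffusion matrix of \eqref{pmala2} is the identity, so $\mathcal{L}_P$ is uniformly elliptic; by the standard results recalled in [\refcite{twe}], every compact set is petite for the associated Markov process, and in particular $C=\{|z|\leq R\}$ is admissible.

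The main obstacle is extracting genuine pointwise growth of $E_N^P$ at infinity from the coercivity hypothesis of Assumption~A, which is what underpins both the drift bound outside a compact set and the integrability $\mu(U)<\infty$. For strongly convex $E_N^P$, and in particular for the quadratic model Hamiltonians of Section~\ref{sec:model_hamiltonians}, this is automatic; in the general case one must rely on the supplementary uniform boundedness of $\mu^P$ for $|z|\geq k$ that is explicitly built into Assumption~A.
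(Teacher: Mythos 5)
Your proof follows the same route as the paper's: compute $\mathcal{L}_P U$ by a direct chain-rule calculation, invoke Assumption A to obtain the drift estimate outside a compact set, and identify the petite set. One noteworthy difference is the sign in front of $\Delta E_N^P$ in the drift formula: your expression $\mathcal{L}_P U = -\beta\bigl[(1-\beta)|\nabla E_N^P|^2 - \Delta E_N^P\bigr]U$ is the correct one (as one sees by factoring $-\beta U$ out of $-\beta|\nabla E_N^P|^2 U + \beta\Delta E_N^P\, U + \beta^2|\nabla E_N^P|^2 U$), whereas the paper's final displayed line carries a $+\Delta E_N^P$ inside the bracket, which is inconsistent with its own immediately preceding step. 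Your invocation of Assumption A therefore implicitly presupposes the corrected sign, i.e.\ $(1-\beta)|\nabla E_N^P|^2 - \Delta E_N^P$ bounded below at infinity, which is also what the coercivity hypothesis $\tfrac12|\nabla E_N|^2 - \Delta E_N \to +\infty$ of Theorem~\ref{th:exponential_convergence} suggests the authors intended. Beyond that, your additional care in explicitly verifying $U\geq 1$ (by shifting $E_N^P$ rather than rescaling $U$), $\mu(U)<\infty$, and the petiteness of compact sets is correct and spells out points the paper only asserts.
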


\begin{proof}
Recall the generator of \eqref{pmala2}
$$\mathcal{L}_P=-\nabla_z E_N^P(z)\nabla + \Delta.$$
For $U(z)=\e^{\beta E_N^P(z)}$, one obtains:
\begin{align}
\mathcal{L}_PU(z)&= -\nabla U(z)\cdot \nabla E_N^P(z) + \Delta U(z) \nonumber\\
&=-\nabla\e^{\beta E_N^P(z)}\cdot \nabla E_N^P(z)+\nabla\cdot(\nabla \e^{\beta E_N^P(z)} )\nonumber\\
&= -\beta |\nabla E_N^P(z)|^2 \e^{\beta E_N^P(z)}+\beta \Delta E_N^P(z)\e^{\beta E_N^P(z)}\nonumber\\
&\quad +\beta^2 |\nabla E_N^P(z)|^2 \e^{\beta E_N^P(z)}\nonumber\\
&= -\beta \left[ (1-\beta)|\nabla E_N^P(z)|^2+\Delta E_N^P(z) \right] U(z).
\end{align}
Therefore, by Assumption A, for $\varepsilon>0 $, $\exists k>0$ such that $\forall\, |z|>k$:
$$(1-\beta)|\nabla E_N^P(z)|^2+\Delta E_N^P(z)>\varepsilon, $$
and so also,
$$\mathcal{L}_PU(z)\leq -\beta \varepsilon U(z)+b\mathbf{1}_{C_k}, $$
where $C_k=\{z\in \R^N;|z|\leq k\}$ and $b>0$.

Finally, since $\psi_\infty (z)$ is bounded, then $U(z)$ is bounded away from zero uniformly. Then $U(z)$ can be rescaled to satisfy the condition $U(z)\geq 1$. Thus, we valid the Foster-Lyapunov criterion for \eqref{pmala2}.
\end{proof}

Using this lemma, the proof of the well-posedness result for the Poisson equation \eqref{poisson2} come straightforward using  Theorem~3.2 in [\refcite{bhatt}].
\subsection{Proof of Theorem~\ref{thtcl}}\label{ann:tcl}
\begin{proof}
We start the proof by decomposing $\mu_t^P(f)-\mu(f) $ into a martingale and a remainder terms:

Using \eqref{poisson2}, \eqref{ops} and \eqref{pmala2}
\begin{align}
\epsilon_t^P(f)-\mu(f) &=\displaystyle \frac{1}{t}\int_0^tf(P^{-1/2}Z_s)ds -\mu(f) \nonumber \\
&=  \displaystyle \frac{1}{t}\int_0^t\lp f(P^{-1/2}Z_s) -\mu(f)\rp ds \nonumber \\
&= \displaystyle \frac{1}{t}\int_0^t-\mathcal{L}_P\phi(Z_s)ds \nonumber \\
%&= \displaystyle \frac{1}{t}\int_0^t \nabla_zE_N(P^{-1/2}Z_s)\cdot\nabla\phi(Z_s)ds-\frac{1}{t}\int_0^t\Delta\phi(Z_s)ds \nonumber \\
%&= \displaystyle \frac{1}{t}\int_0^t\nabla\phi(Z_s)\cdot\lp -dZ_s+\sqrt{2}dW_s \rp \nonumber \\
& = \displaystyle\frac{\phi(Z_0)-\phi(Z_t)}{t}+\frac{\sqrt{2}}{t}\int_0^t\nabla \phi(Z_s)dW_s\nonumber \\
&:= R_t+M_t.\nonumber
\end{align}

Consider now the rescaling $\sqrt{t}(\epsilon_t^P(f)-\mu(f) ) $. Using the central limit theorem for the martingale term $\sqrt{t}M_t $ (see [\refcite{hella}] , Theorem 5.3), one obtains the following convergence in distribution

$$\displaystyle \sqrt{t}M_t \xrightarrow{D} \mathcal{N}(0,\sgfp), $$
with
\begin{equation*}
\sgfp = \displaystyle 2\int |\nabla_z\phi (z) |^2\mu^P(dz)=2\int |\nabla_z\phi (z) |^2\mu(dx).
\end{equation*}
It remains to study the remainder term $\sqrt{t}R_t $. We consider the two cases: If $Z_0\sim\mu $, then since $\phi\in L^2(\mu) $, we have that
$$\displaystyle \sqrt{t}R_t \xrightarrow{D} 0 \text{ in } L^2(\mu). $$
In the more general case, we must refer to a "propagation of chaos" argument (see for example [\refcite{catt}], Section~8 and [\refcite{dunc}]), to obtain the same result.
\end{proof}

\section*{Acknowledgments}
We thank Tony Leli\`evre and Jonathan Weare for helpful discussions.
The work was supported by ERC Starting Grant 335120.

\clearpage

\appendix

\section{Setup of the Metadynamics example}
\label{sec:mtd}
% with $\delta =$ 0.04 eV
%
In the example shown in Figure~\ref{fig:pmf_mtd_mtd}, we investigated
how preconditioning the {\em second order Langevin dynamics} affects the sampling
of different observables when applied to adaptive potential of mean force (PMF) techniques
such metadynamics (MTD) [\refcite{Laio02}]. Unlike the previously discussed umbrella sampling
method, adaptive PMF techniques build their bias and mean force or potential of
mean force estimates on-the-fly during the dynamics. In a general form, the
corresponding biased and preconditioned equations of motion with preconditioner
$P = P(q)$ can be written as
\begin{subequations}
	\label{eq:biased_langevin}
	\begin{align}
		dq_{t} & = M^{-1} p_{t} dt \\
		dp_{t} & = -\nabla E_{N}(q_{t}) dt + \nabla \xi(q_{t}) F_{\mathrm{b}}(\xi(q_{t}), t) - P M^{-1} p_{t} dt + \sqrt{\frac{2}{\beta}} P^{1/2} dW_t
	\end{align}
\end{subequations}
where $M \in \mathbb{R}^{N \times N}$ is the diagonal mass matrix, $\beta$ is
the inverse temperature, $\xi$ is the collective variable (reaction coordinate) and
$F_{\mathrm{b}}(\xi, t)$ is the biasing force. In the case of metadynamics,
$F_{\mathrm{b}}^{\mathrm{MTD}}(\xi, t) = -\nabla_{\xi}
E_{\mathrm{b}}^{\mathrm{MTD}}(\xi, t)$, where
$E_{\mathrm{b}}^{\mathrm{MTD}}(\xi, t)$ is some history dependent biasing
potential composed by Gaussians regularly deposited in the collective variable
space:
\begin{equation}
	\label{eq:mtd}
	E_{\mathrm{b}}^{\mathrm{MTD}}(\xi, t) = \sum \limits_{t' < t} \delta_{t'} \exp \left(-\frac{1}{2} \left(\xi - \xi(q_{t'}) \right)^{T} w^{-2} \left(\xi - \xi(q_{t'}) \right) \right)
\end{equation}
where $\delta_{t'}$ is the height of the corresponding Gaussian and $w$ is a diagonal matrix including the widths of the collective variable components.

The molecular system we chose for this test was the 2-(formylamino)\!
propionaldehyde in gas phase using the Amber99SB~[\refcite{Hornak06}] force
field. 

The molecule has a single slow degree of freedom, a dihedral angle, that
was selected as the one-dimensional collective variable (we note that this
variable is highly associated to one of the two dihedrals of alanine dipeptide,
a test system widely used in the computational chemistry field).

We performed 200 ps long molecular dynamics simulations at 300 K using the BBK
integrator scheme [\refcite{Brunger84}] with 0.5 fs timestep. In the case of
unpreconditioned dynamics $P = \gamma M$ was used with $\gamma = 5.0$ ps$^{-1}$,
while for the preconditioned dynamics we applied $P = \gamma M + \tau \tilde{H}$, where 
$\tilde{H}$ is a Hessian-based preconditioner, whose positiveness is guaranteed by 
rebuilding the matrix using the spectral decomposition of the Hessian with the absolute
values of the eigenvalues [\refcite{MonesPRE}].
We used $\gamma M$ as diagonal a stabiliser and varied $\gamma$ and $\tau$ parameters.

A deposition frequency of 1/50 fs$^{-1}$ and a starting
height of $\delta = 0.004$ eV were used in a well-tempered
variant of MTD [\refcite{Barducci08}] with $T_{\mathrm{w}} = 10000$ K. 

Free energy profiles were reconstructed simply as the negative of the actual history
dependent biasing potential. The reference for computing the RMS error of the
profiles was obtained from a 200 ns long unpreconditioned constrained dynamics
simulation [\refcite{Mones16}].

The result of this test is shown in Figure~\ref{fig:pmf_mtd_mtd},
where we plotted the RMS error of the reconstructed profiles based on 10 
independent MTD simulations for each parameter set. We observe that 
in general, preconditioning does not improve the convergence.

As a matter of fact, we performed a variety of similar tests, e.g. with
different parameters, or different observables, all of which led to
similar conclusions. Indeed, depending on the choice of observable,
preconditioning often has an even larger {\em negative} impact.

 %\co{The preconditioner is constructed analogously to \eqref{eq:precond_nonlinear},
%replacing local hessian contributions from the force field with positive
%semi-definite modifications [\refcite{MonesPRE}].}


\begin{thebibliography}{100}

\bibitem{Barducci08} {\sc A. Barducci, G. Bussi and M. Parrinello},
 \sl{Well-Tempered Metadynamics: A Smoothly Converging and Tunable Free-Energy Method},
 Phys. Rev. Lett. 100(2):020603 (2008).
\bibitem{bes09} {\sc A. Beskos, G. Roberts and A. Stuart}, \sl{Optimal scalings for local Metropolis–Hastings chains on nonproduct targets in high dimensions}, The Annals of Applied Probability 19(3):863-98 (2009).
\bibitem{bhatt} {\sc R. Bhattacharya}, \sl{A central limit theorem for diffusions with periodic coefficients}, The Annals of Probability 13(2):385–396 (1985).
\bibitem{brasc} {\sc H.J. Brascamp and E.H. Lieb}, \sl{On extensions of the Brunn-Minkowski and Pr\'ekopa-Leindler theorems, including inequalities for log concave functions, and with an application to the diffusion equation}, Journal of functional analysis 22(4):366-389 (1976).
\bibitem{Brunger84} {\sc A. Br\"unger, C. L. Brooks and M. Karplus},
\sl{Stochastic boundary conditions for molecular dynamics simulations of ST2 water},
Chemical Physics Letters 105(5):495--500 (1984).
\bibitem{catt} {\sc P. Cattiaux, D. Chafaı, and A. Guillin}, \sl{Central limit theorems for additive functionals of ergodic Markov diffusions processes}, ALEA 9(2):337–382 (2012).
\bibitem{dunc} {\sc A. B. Duncan, T. Leli\`evre, and G. A. Pavliotis}, \sl{Variance Reduction using Nonreversible Langevin Samplers}, Journal of Statistical Physics 163.3 457-491 (2016).
\bibitem{giro09} {\sc M. Girolami and B. Calderhead}, \sl{Riemann Manifold Langevin and Hamiltonian Monte Carlo methods}, Journal of the Royal Statistical Society: Series B (Statistical Methodology) 73(2):123–214 (2011)
\bibitem{glynn} {\sc P. W. Glynn and  S.P. Meyn}, \sl{A Liapounov bound for solutions of the Poisson equation}, The Annals of Probability 916-931 (1996).
\bibitem{hairr} {\sc E. Hairer, C. Lubich and G. Wanner}, \sl{Geometric numerical integration illustrated by the Stormer-Verlet method}, Acta numerica 12(12):399-450 (2003).
\bibitem{hella} {\sc I. S. Helland}, \sl{Central limit theorems for martingales with discrete or continuous time}, Scandinavian Journal of Statistics 9(2):79–94 (1982).
\bibitem{hit} {\sc M. Hitrik, K. Pravda-Starov}, \sl{Spectra and semigroup smoothing for non-elliptic quadratic operators}, Math. Ann. 344(4):801–846 (2009).
\bibitem{hou} {\sc F. Hou, J. Goodman, D.W. Hogg, J. Weare and C. Schwab }, \sl{An affine-invariant sampler for exoplanet fitting and discovery in radial velocity data}, The Astrophysical Journal, 745(2), 198 (2012).
\bibitem{Hornak06} {\sc W. Hornak, R. Abel, A. Okur, B. Strockbine, A. Roitberg and C. Simmerling}, \sl{Comparison of multiple Amber force fields and development of improved protein backbone parameters}, Prot.~Struct.~Funct.~Bioinf. 65(3):712--725  (2006).
\bibitem{iserles} {\sc  A. Iserles}, A First Course in the Numerical Analysis of Differential Equations, Cambridge Texts in Applied Mathematics, Cambridge University Press (2008).
\bibitem{umbrella} {\sc J. K\"{a}stner and W. Thiel}, \sl{Bridging the gap between thermodynamic integration and umbrella
sampling provides a novel analysis method: Umbrella integration}, J. Chem. Phys. 123 (2005).
\bibitem{kipnis} {\sc C. Kipnis and S.R.S. Varadhan}, \sl{Central limit theorem for additive functionals of reversible Markov processes and applications to simple exclusions}, Communications in Mathematical Physics 104(1):1–19 (1986).
%\bibitem{komo09} {\sc T. Komorowski, C. Landim, and S. Olla}, \sl{Fluctuations in Markov Processes:Time Symmetry and Martingale Approximation}, Springer, Heidelberg (2009).
\bibitem{Laio02}
{\sc A. Laio, M. Parrinello}, \sl{Escaping free-energy minima}, Proceedings of the National Academy of Sciences 99:12562-12566 (2002).
%\bibitem{lamb:09} {\sc P. Lambert and P.H.C. Eilers}, \sl{Bayesian density estimation from grouped continuous data}, Computnl Statist. Data Anal., 53, 1388–1399 (2009).
\bibitem{Lelievre} {\sc T. Leli\`evre, F. Nier, and G. Pavliotis}, \sl{Optimal non-reversible linear drift for the convergence to equilibrium of a diffusion}, Journal of Statistical Physics 152(2):237-274 (2013).
\bibitem{Lelievre:17} {\sc T. Leli\`evre}, \sl{Personal communication} (2017).
\bibitem{mart12} {\sc J. Martin, L. C. Wilcox, C. Burstedde, and O. Ghattas}, \sl{A stochastic Newton MCMC method for large-scale statistical inverse problems with application to seismic inversion}, SIAM Journal on Scientific Computing 34(3):A1460–A1487 (2012).
%\bibitem{matt:16} {\sc C. Matthews, J. Weare and B. Leimkuhler}, \sl{Ensemble preconditioning for Markov chain Monte Carlo simulation}, arXiv preprint arXiv:1607.03954 (2016).
\bibitem{mattin} {\sc J. C. Mattingly, A. M. Stuart, and D. J. Higham}, \sl{Ergodicity for SDEs and approximations: locally Lipschitz vector fields and degenerate noise}, Stochastic processes and their applications 101(2):185–232 (2002).
\bibitem{twe} {\sc S. P. Meyn and R. L. Tweedie}, \sl{Stability of Markovian processes III: Foster-Lyapunov criteria for continuous-time processes}, Advances in Applied Probability 25(3):518–548 (1993).
\bibitem{meyn} {\sc S. P. Meyn and R. L. Tweedie}, \sl{A survey of Foster-Lyapunov techniques for general state space Markov processes}, In Proceedings of the Workshop on Stochastic Stability and Stochastic Stabilization, Metz, France. Citeseer (1993).
\bibitem{Mones16} {\sc L. Mones, N. Bernstein and G. Csányi},
\sl{Exploration, Sampling, And Reconstruction of Free Energy Surfaces with Gaussian Process Regression}, Journal of Chemical Theory and Computation 12(10):5100-5110 (2016).
\bibitem{MonesPRE} {\sc L. Mones, G. Csányi and C. Ortner}, in preparation.
\bibitem{olson} {\sc D. Olson and C. Ortner}, \sl{Regularity and locality of point defects in multilattices}, to appear in AMRX, arXiv:1608.08930 (2016).
\bibitem{packwood} {\sc D. Packwood, J. Kermode, L. Mones, N. Bernstein, J. Woolley, N. I. M. Gould, C. Ortner and G. Csanyi}, \sl{A universal preconditioner for simulating condensed phase materials}, J. Chem. Phys. 144(16) (2016).
\bibitem{Pillai} {\sc N. S. Pillai, A. M. Stuart,A. H. Thi\'ery}, \sl{Optimal scaling and diffusion limits for the Langevin algorithm in high dimensions}, The Annals of Applied Probability 22(6):2320-2356 (2012).
\bibitem{ott} {\sc M. Ottobre, G. A. Pavliotis and K. Pravda-Starov}, \sl{Exponential return to equilibrium for hypoelliptic quadratic systems}, Journal of Functional Analysis 262.9:4000-4039 (2012).
\bibitem{ReedSimon} {\sc M. Reed and B. Simon}, Methods of Modern Mathematical Physics. IV: Analysis of Operators, Academic Press, New York (1978).
%
\bibitem{rob04} {\sc  C. P. Robert and G. Casella}, \sl{Monte Carlo Statistical Methods}, 2nd ed. Springer, New York (2004).
%\bibitem{Roberts} {\sc G.O. Roberts, J. S.  Rosenthal}, \sl{Optimal scaling of discrete approximations to Langevin diffusions}, Journal of the Royal Statistical Society. Series B, Statistical Methodology 255-268 (1998).
\bibitem{rob96} {\sc G. O. Roberts and R. L. Tweedie}, \sl{Exponential convergence of langevin distributions and their discrete approximations}, Bernoulli 2(4):341–363 (1996).
\bibitem{Villani} {\sc V. Villani}, \sl{Hypocoercivity}, Bull. Am. Math. Soc. 46(2), Memoirs Amer. Math. Soc. 202 (2009).
\bibitem{xifa} {\sc T. Xifara, C. Sherlock, S. Livingstone, S. Byrne and M. Girolami}, \sl{Langevin diffusions and the Metropolis-adjusted Langevin algorithm}, Statistics $\&$ Probability Letters, 91:14-19 (2014).


\end{thebibliography}
\end{document}